\newtheorem{theorem}{Theorem}[section]
\newtheorem{lemma}[theorem]{Lemma}
\newtheorem*{lemma*}{Lemma}
\newtheorem*{corollary*}{Corollary\,A}
\newtheorem*{theorem*}{Theorem}
\newtheorem*{mtheorem*}{Main Theorem}
\newtheorem*{proposition*}{Proposition\,B}
\theoremstyle{definition}
\newtheorem{example}[theorem]{Example}
\newtheorem*{example*}{Example}
\theoremstyle{remark}
\sloppy\pagestyle{plain}
\makeatletter\@addtoreset{equation}{section} \makeatother
\title{Smooth Fano 3-folds satisfying Condition $(\mathbf{A})$}
\author{Hamid Abban, Ivan Cheltsov, Takashi Kishimoto, Fr\'ed\'eric Mangolte}
\let\origmaketitle\maketitle
\def\maketitle{
  \begingroup
  \def\uppercasenonmath##1{} 
  \let\MakeUppercase\relax 
  \origmaketitle
  \endgroup
}
\dedicatory{To Yuri Tschinkel on the~occasion of his 60th birthday.}
\begin{document}

\begin{abstract}
A smooth variety is said to satisfy Condition $(\mathbf{A})$ if every finite abelian subgroup of its automorphism group has a fixed point.
We classify smooth Fano 3-folds that satisfy Condition $(\mathbf{A})$.
\end{abstract}

\address{\emph{Hamid Abban}\newline
\textnormal{University of Nottingham, Nottingham, England
\newline
\texttt{hamid.abban@nottingham.ac.uk}}}

\address{ \emph{Ivan Cheltsov}\newline
\textnormal{University of Edinburgh, Edinburgh, Scotland
\newline
\texttt{i.cheltsov@ed.ac.uk}}}

\address{\emph{Takashi Kishimoto}\newline
\textnormal{Saitama University, Saitama, Japan
\newline
\texttt{kisimoto.takasi@gmail.com}}}

\address{\emph{Fr\'ed\'eric Mangolte}\newline
\textnormal{Aix Marseille University, CNRS, I2M, Marseille, France
\newline
\texttt{frederic.mangolte@univ-amu.fr}}}

\maketitle


For simplicity of exposition, all varieties are assumed to be projective, normal, irreducible, 
and defined over $\mathbb{C}$, the~field of complex numbers, or sometimes a subfield $\Bbbk\subseteq\mathbb{C}$, unless otherwise stated explicitly. 

\section{Introduction}
\label{sec:introduction}

Fano varieties play a central role in algebraic geometry, particularly in the classification of projective varieties and the study of their birational properties. Among these, smooth Fano 3-folds over $\mathbb{C}$ have been extensively studied, with their deformation families systematically classified by Iskovskhikh, Mori, and Mukai. A natural question in the study of such varieties concerns the structure of their automorphism groups and the existence of fixed points under group actions. In this context, we pay particular attention to Condition~$\mathbf{(A)}$, which stipulates that every finite abelian subgroup of the automorphism group of a smooth variety fixes at least one point.

This paper investigates which smooth Fano 3-folds satisfy Condition~$\mathbf{(A)}$, building on prior work in \cite{AbbanCheltsovKishimotoMangolte-1,AbbanCheltsovKishimotoMangolte-2}. Our main result, presented in the Main Theorem below, provides a complete classification of the $105$ deformation families of smooth Fano 3-folds with respect to Condition~$\mathbf{(A)}$. Specifically, we identify families where all members satisfy Condition $\mathbf{(A)}$, families where no members satisfy it, and families containing members that do not satisfy it. This classification leverages the Mori-Mukai notation \cite{fanography,IsPr99} and relies on detailed analyses of automorphism groups and their actions.

As a consequence of our Main Theorem and results from \cite{AbbanCheltsovKishimotoMangolte-2}, we derive a corollary concerning the unirationality of Fano 3-folds over subfields $\Bbbk\subseteq\mathbb{C}$. We further explore the existence of rational points, establishing in Proposition\,B below that smooth Fano 3-folds in certain families always admit $\Bbbk$-points, while others contain members with no $\Bbbk$-points for specific subfields, such as $\mathbb{R}$ or $\mathbb{Q}$. These findings connect the geometric properties of Fano 3-folds to arithmetic questions, shedding light on their behavior over non-closed fields. The main result of this paper is the following theorem.

\begin{mtheorem*}
Let $X$ be a smooth Fano 3-fold. If $X$ is contained in one of the deformation families
\begin{center}
\textnumero 1.10, \textnumero 1.11, \textnumero 1.15, \textnumero 2.1, \textnumero 2.9, \textnumero 2.11, \textnumero 2.13, \textnumero 2.14, \textnumero 2.15,\\ \textnumero 2.17, \textnumero 2.20, \textnumero 2.22,  \textnumero 2.26, \textnumero 2.28, \textnumero 2.30, \textnumero 2.31, \textnumero 2.35,\\ 
\textnumero 2.36,  \textnumero 3.8, \textnumero 3.11, \textnumero 3.14, \textnumero 3.15, \textnumero 3.16, \textnumero 3.18, \textnumero 3.21, \textnumero 3.22,\\ \textnumero 3.23, \textnumero 3.24, \textnumero 3.26, \textnumero 3.29, \textnumero 3.30, \textnumero 4.5, \textnumero 4.9, \textnumero 4.11, \textnumero 5.1,
\end{center}
then $X$ satisfies Condition~$\mathbf{(A)}$. If $X$ is contained in one of the deformation families
\begin{center}
\textnumero 1.14, \textnumero 1.16, \textnumero 1.17, \textnumero 2.25, \textnumero 2.27, \textnumero 2.29, \textnumero 2.32, \textnumero 2.33, \textnumero 2.34, \textnumero 3.17, \\
\textnumero 3.19, \textnumero 3.20, \textnumero 3.25, \textnumero 3.27, \textnumero 3.28, \textnumero 3.31,  \textnumero 4.1, \textnumero 4.2, \textnumero 4.3, \textnumero 4.4,  \textnumero 4.6, \\
\textnumero 4.7, \textnumero 4.8, \textnumero 4.10,  \textnumero 4.12, \textnumero 5.2, \textnumero 5.3, \textnumero 6.1, \textnumero 7.1, \textnumero 8.1, \textnumero 9.1, \textnumero 10.1,
\end{center}
then $X$ does not satisfy Condition~$\mathbf{(A)}$. Finally, every (remaining) deformation family
\begin{center}
\textnumero 1.1, \textnumero 1.2, \textnumero 1.3, \textnumero 1.4, \textnumero 1.5, \textnumero 1.6, \textnumero 1.7, \textnumero 1.8, \textnumero 1.9, \textnumero 1.12, \\
\textnumero 1.13, \textnumero 2.2, \textnumero 2.3, \textnumero 2.4, \textnumero 2.5, \textnumero 2.6,  \textnumero 2.7, \textnumero 2.8, \textnumero 2.10, \textnumero 2.12, \\
\textnumero 2.16, \textnumero 2.18, \textnumero 2.19, \textnumero 2.21, \textnumero 2.23, \textnumero 2.24,  \textnumero 3.1, \textnumero 3.2,  \textnumero 3.3,  \textnumero 3.4,  \\ \textnumero 3.5, \textnumero 3.6, \textnumero 3.7, \textnumero 3.9, \textnumero 3.10, \textnumero 3.12, \textnumero 3.13, \textnumero 4.13,
\end{center}
contains a smooth Fano 3-fold that does not satisfy Condition~$\mathbf{(A)}$.
\end{mtheorem*}

\begin{corollary*}[c.f.\ {\cite{CheltsovTschinkelZhang2025},\cite[Theorem 1.1]{DuncanReichstein}}]
Every deformation family among
\begin{center}
\textnumero 1.1, \textnumero 1.2, \textnumero 1.3, \textnumero 1.4, \textnumero 1.5, \textnumero 1.6, \textnumero 1.7, \textnumero 1.8, \textnumero 1.9, \textnumero 1.12, \\
\textnumero 1.13, \textnumero 1.14, \textnumero 1.16, \textnumero 1.17,  \textnumero 2.2, \textnumero 2.3, \textnumero 2.4, \textnumero 2.5, \textnumero 2.6,  \textnumero 2.7, \\
\textnumero 2.8, \textnumero 2.10, \textnumero 2.12, \textnumero 2.16, \textnumero 2.18, \textnumero 2.19, \textnumero 2.21, \textnumero 2.23, \textnumero 2.24, \textnumero 2.25, \\
\textnumero 2.27, \textnumero 2.29, \textnumero 2.32, \textnumero 2.33, \textnumero 2.34, \textnumero 3.1, \textnumero 3.2,  \textnumero 3.3,  \textnumero 3.4,  \textnumero 3.5, \\
\textnumero 3.6, \textnumero 3.7, \textnumero 3.9, \textnumero 3.10, \textnumero 3.12, \textnumero 3.13, \textnumero 3.17, \textnumero 3.19, \textnumero 3.20, \textnumero 3.25, \\
\textnumero 3.27, \textnumero 3.28, \textnumero 3.31,  \textnumero 4.1, \textnumero 4.2, \textnumero 4.3, \textnumero 4.4,  \textnumero 4.6, \textnumero 4.7, \textnumero 4.8, \\
\textnumero 4.10,  \textnumero 4.12,  \textnumero 4.13, \textnumero 5.2, \textnumero 5.3, \textnumero 6.1, \textnumero 7.1, \textnumero 8.1, \textnumero 9.1, \textnumero 10.1
\end{center}
contains a smooth Fano 3-fold defined over some subfield $\Bbbk\subseteq\mathbb{C}$ that is not $\Bbbk$-unirational.
\end{corollary*}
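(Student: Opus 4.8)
The plan is to combine the Main Theorem with the general principle, developed in \cite{AbbanCheltsovKishimotoMangolte-2} and going back to \cite{DuncanReichstein}, that a failure of Condition~$\mathbf{(A)}$ yields a twisted form without rational points. The first observation is that the family numbers listed in the Corollary are exactly the union of the second and third lists appearing in the Main Theorem. Hence, for each family in the Corollary, the Main Theorem supplies a smooth Fano 3-fold $X$ over $\mathbb{C}$ that does \emph{not} satisfy Condition~$\mathbf{(A)}$; by definition this furnishes a finite abelian subgroup $A\subseteq\operatorname{Aut}(X)$ whose action on $X$ has no fixed point, i.e.\ $X^{A}=\varnothing$.

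Next I would descend the pair $(X,A)$ to a finitely generated subfield $k_{0}\subseteq\mathbb{C}$ over which $X$ and all (finitely many) automorphisms in $A$ are defined, and then build a versal $A$-torsor. Choosing a faithful representation $A\hookrightarrow\operatorname{GL}(V)$ over $k_{0}$ and setting $L=k_{0}(V)$ and $\Bbbk=k_{0}(V)^{A}$, the extension $L/\Bbbk$ is Galois with group $A$, and $\Bbbk$, being finitely generated of characteristic $0$, embeds into $\mathbb{C}$. Twisting $X_{L}$ by the diagonal (geometric $\times$ Galois) $A$-action gives, via Galois descent, a smooth Fano 3-fold $Y$ over $\Bbbk$ with $Y_{L}\cong X_{L}$; in particular $Y\otimes_{\Bbbk}\mathbb{C}\cong X$, so $Y$ lies in the same deformation family as $X$.

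The crucial point is that $Y(\Bbbk)=\varnothing$. Here I would invoke the versality mechanism of \cite{DuncanReichstein}: because $L/\Bbbk$ is the versal $A$-torsor, the existence of a $\Bbbk$-point on the twist $Y$ would force the $A$-action on $X$ to admit a fixed point, so $X^{A}=\varnothing$ gives $Y(\Bbbk)=\varnothing$. Establishing this implication — that the versal twist faithfully detects fixed points, so that their absence genuinely kills every $\Bbbk$-point — is the main obstacle, and is precisely the input I would import from \cite{AbbanCheltsovKishimotoMangolte-2}. Finally, since $\Bbbk\supseteq\mathbb{Q}$ is infinite, any positive-dimensional $\Bbbk$-unirational variety carries a Zariski-dense, hence nonempty, set of $\Bbbk$-points; as $Y(\Bbbk)=\varnothing$, the 3-fold $Y$ cannot be $\Bbbk$-unirational. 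This exhibits in every listed family a member defined over a subfield $\Bbbk\subseteq\mathbb{C}$ that is not $\Bbbk$-unirational, as required.
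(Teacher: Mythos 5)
Your proposal is correct and takes essentially the same route as the paper: the Corollary is derived there, without a written-out proof, by combining the Main Theorem (whose second and third lists together are exactly the seventy families of the Corollary) with the versality mechanism of \cite{DuncanReichstein} as used in \cite{AbbanCheltsovKishimotoMangolte-2,CheltsovTschinkelZhang2025}, which is precisely the twist-by-a-versal-torsor argument you outline. Your write-up simply makes the standard details explicit (descent of $(X,A)$ to a finitely generated field, the generic torsor $L/\Bbbk$, re-embedding $\Bbbk$ into $\mathbb{C}$, and the fixed-point detection that underlies \cite[Theorem 1.1]{DuncanReichstein}), and in passing gives the stronger conclusion $Y(\Bbbk)=\varnothing$, which the paper prefers to record as an expectation and establish by the concrete constructions of Proposition\,B.
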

    
We expect that every family listed in this corollary contains a smooth Fano 3-fold $X$ defined over some subfield $\Bbbk\subseteq\mathbb{C}$ such that $X(\Bbbk)=\varnothing$.
For deformation families 
\begin{center}
\textnumero 1.6, \textnumero 1.8, \textnumero 1.9, \textnumero 1.13, \textnumero 1.14,
\textnumero 1.16, \textnumero 1.17, \textnumero 2.12, \textnumero 2.21, \textnumero 2.32, \textnumero 3.27, \textnumero 4.1,
\end{center}
this expectation follows from the corollary and \cite{Kollar,KuznetsovProkhorov2023,KuznetsovProkhorov2024}.
This is also true for the remaining deformation families by the following more precise result.

\begin{proposition*}
Let $X$ be a smooth Fano 3-fold defined over a subfield $\Bbbk\subseteq\mathbb{C}$. Suppose that $X$ is contained in one of the following deformation families:
\begin{center}
\textnumero 1.11, \textnumero 1.15, \textnumero 2.1, \textnumero 2.9, \textnumero 2.11, \textnumero 2.14, \textnumero 2.15, \textnumero 2.17, \textnumero 2.20, \textnumero 2.22, \textnumero 2.26, \\
\textnumero 2.28, \textnumero 2.30, \textnumero 2.31, \textnumero 2.35, \textnumero 2.36, \textnumero 3.8, \textnumero 3.11, \textnumero 3.14, \textnumero 3.15, \textnumero 3.16,  \textnumero 3.18, \\
\textnumero 3.21, \textnumero 3.22, \textnumero 3.23, \textnumero 3.24, \textnumero 3.26, \textnumero 3.29, \textnumero 3.30, \textnumero 4.5, \textnumero 4.9, \textnumero 4.11, \textnumero 5.1.
\end{center}
Then $X(\Bbbk)\ne\varnothing$. Moreover, every deformation family among
\begin{center}
\textnumero 1.1, \textnumero 1.2, \textnumero 1.3, \textnumero 1.4, \textnumero 1.5, \textnumero 1.10, \textnumero 1.12, \textnumero 1.14, \textnumero 1.16,
\textnumero 1.17, \textnumero 2.2, \textnumero 2.3, 
\textnumero 2.4, \\ 
\textnumero 2.6, \textnumero 2.7, \textnumero 2.8, \textnumero 2.10, \textnumero 2.12,
\textnumero 2.13, \textnumero 2.16, \textnumero 2.18, \textnumero 2.19, \textnumero 2.21, \textnumero 2.23, 
\textnumero 2.25, \textnumero 2.27, \\ 
\textnumero 2.29,
\textnumero 2.32, \textnumero 2.33,  \textnumero 2.34, \textnumero 3.1, \textnumero 3.2, \textnumero 3.3, \textnumero 3.4, \textnumero 3.5,  \textnumero 3.6, 
\textnumero 3.9,
\textnumero 3.10, \textnumero 3.12, \\  
\textnumero 3.13, \textnumero 3.17, \textnumero 3.19, \textnumero 3.20, \textnumero 3.25, \textnumero 3.27, \textnumero 3.28,
\textnumero 3.31, \textnumero 4.1, 
\textnumero 4.2, \textnumero 4.3, 
\textnumero 4.4, \\ 
\textnumero 4.6, \textnumero 4.7, \textnumero 4.8, \textnumero 4.10, \textnumero 4.12,  \textnumero 4.13, \textnumero 5.2, \textnumero 5.3, \textnumero 6.1, \textnumero 7.1, \textnumero 8.1, \textnumero 9.1, \textnumero 10.1
\end{center}
contains a real smooth pointless Fano 3-fold, each  family 
\textnumero 1.9, \textnumero 1.13,  \textnumero 2.5,  \textnumero 2.24, \textnumero 3.7
contains a smooth Fano 3-fold defined over $\mathbb{Q}$ that does not have rational points,
and families \textnumero 1.6, \textnumero 1.7, \textnumero 1.8 contain smooth members defined over 
a subfield $\Bbbk\subseteq\mathbb{C}$ which have no $\Bbbk$-points. 
\end{proposition*}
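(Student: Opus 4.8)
The plan is to split the statement into two opposite problems and to organize the proof accordingly. For the thirty-three families in the first list I must produce a $\Bbbk$-point on every $\Bbbk$-form, whereas for all the remaining families I must construct a single $\Bbbk$-form (over $\mathbb{R}$, over $\mathbb{Q}$, or over some other subfield) having no $\Bbbk$-point. The two directions rely on entirely different tools, so I treat them separately.

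For the positive direction I would argue family by family, exhibiting in each case a canonical---hence Galois-invariant, and therefore $\Bbbk$-rational---construction that forces a $\Bbbk$-point. The main devices are: a canonical zero-dimensional base locus (for the degree-one del Pezzo threefold, family 1.11, the Picard generator $H$ satisfies $H^3=1$ and the net $|H|$ has a single base point, which is automatically a $\Bbbk$-point); a known point-existence statement for the rigid families (the quintic del Pezzo threefold, family 1.15, has a $\Bbbk$-point over every field); and a canonical extremal contraction $X\to Y$ defined over $\Bbbk$, read off the Mori--Mukai description as a blow-up or fibre product, which reduces the problem to a lower-dimensional $Y$ carrying a guaranteed $\Bbbk$-point together with a $\Bbbk$-section or an odd-degree multisection. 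The principal difficulty is that $X$ ranges over all $\Bbbk$-forms, not only the split one, so each construction must be checked to descend for an arbitrary form and to yield a genuine $\Bbbk$-point rather than merely a $\Bbbk$-rational zero-cycle; the families whose canonical contraction lands on a possibly pointless conic or Severi--Brauer target are the delicate ones.

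For the negative direction I would use equivariant twisting in the style of \cite{DuncanReichstein}. The engine is the Main Theorem together with \cite{AbbanCheltsovKishimotoMangolte-2}: every remaining family, apart from 1.10 and 2.13, contains a member $X_{\mathbb{C}}$ carrying a finite abelian subgroup $A\subseteq\mathrm{Aut}(X_{\mathbb{C}})$ with no fixed point. A cocycle $\mathrm{Gal}(\overline{\Bbbk}/\Bbbk)\to A$ obtained from a Galois extension of $\Bbbk$ with group onto $A$ twists a chosen form $X_0$ into a form $X_c$ whose set $X_c(\Bbbk)$ equals the fixed locus of the twisted action, which I can make empty precisely because $A$ has no fixed point. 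Over $\mathbb{R}$ (the second list) the cocycle is a single fixed-point-free antiholomorphic involution $\tau$, giving $X(\mathbb{R})=\mathrm{Fix}(\tau)=\varnothing$. The families 1.10 and 2.13 satisfy Condition $\mathbf{(A)}$, so no fixed-point-free abelian group is available; for them I would instead exhibit directly a fixed-point-free antiholomorphic involution on a suitable member (using the explicit equations of the genus-twelve threefold and of family 2.13), which is possible because an antiholomorphic involution is not subject to the holomorphic Lefschetz formula that forces fixed points in the holomorphic case.

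The families singled out in the last two assertions are exactly those for which twisting over $\mathbb{R}$ cannot remove all points, so I replace the real obstruction by a finer arithmetic one. For 1.9, 1.13, 2.5, 2.24, 3.7 I would twist over $\mathbb{Q}$ through a Galois extension of some $\mathbb{Q}_p$, arranging $X_c(\mathbb{Q}_p)=\varnothing$ while keeping $X_c(\mathbb{R})\ne\varnothing$, so that $X_c(\mathbb{Q})=\varnothing$ by the local obstruction. For the genus $7$, $8$ and $9$ prime Fano threefolds (families 1.6, 1.7, 1.8) I would use their models as linear sections of homogeneous spaces (the spinor tenfold, the Grassmannian $\mathrm{Gr}(2,6)$, and the symplectic Grassmannian) and twist by a suitable nontrivial torsor or Brauer class of the appropriate degree over a large enough subfield $\Bbbk$ to obtain a form with no $\Bbbk$-point. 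I expect the positive direction to be the main obstacle, both for its case-by-case nature and for the need to control all $\Bbbk$-forms at once; within the negative direction the delicate points are the bespoke constructions for 1.10 and 2.13, and the verification in the $\mathbb{Q}$-case that the chosen twist retains a real point while losing a $p$-adic one.
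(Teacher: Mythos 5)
Your positive direction coincides in substance with the paper's: most of the thirty-three families are handled there by citing the prequel \cite{AbbanCheltsovKishimotoMangolte-1}, and the residual families \textnumero 1.11, \textnumero 2.1, \textnumero 2.15, \textnumero 3.15, \textnumero 5.1 are treated in Lemma~\ref{lemma:P} essentially as you outline --- the canonical base point of $|H|$, canonical contractions defined over $\Bbbk$, Springer's theorem for the quadric, and Lang--Nishimura. The genuine gap is in your negative direction. You claim that the fixed-point-free finite abelian subgroups $A\subseteq\mathrm{Aut}(X_{\mathbb{C}})$ supplied by the Main Theorem can be converted, by Galois twisting over $\mathbb{R}$, into real forms with no real points. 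This implication is false. First, real twists are governed by $H^1\big(\mathrm{Gal}(\mathbb{C}/\mathbb{R}),\mathrm{Aut}(X_{\mathbb{C}})\big)$, and cocycles valued in $A$ see only its $2$-torsion; for several of the relevant families the paper's fixed-point-free group has odd order (e.g.\ $(\mathbb{Z}/3\mathbb{Z})^2$ for \textnumero 2.6 and \textnumero 2.32), so it yields no nontrivial real twist at all. Second, even when $A$ is $2$-elementary and commutes with a chosen real structure $\sigma_0$, the real points of the twist by $a\in A$ form the fixed locus of the antiholomorphic involution $a\sigma_0$, and emptiness of the holomorphic fixed locus $X^{A}$ does not imply emptiness of $\mathrm{Fix}(a\sigma_0)$: these are unrelated conditions. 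A clean counterexample to your implication is $\mathbb{P}^2$, which carries a fixed-point-free $(\mathbb{Z}/3\mathbb{Z})^2$ yet admits no pointless real form, since the Brauer group of $\mathbb{R}$ has no $3$-torsion. What the Duncan--Reichstein machinery actually extracts from $X^{A}=\varnothing$ is a pointless twist over \emph{some} field extension --- this is precisely how Corollary\,A of the paper is obtained --- but Proposition\,B asserts pointlessness over the specific fields $\mathbb{R}$ and $\mathbb{Q}$, which abstract twisting cannot deliver; the same objection applies to your proposed $p$-adic twisting for the families \textnumero 1.9, \textnumero 1.13, \textnumero 2.5, \textnumero 2.24, \textnumero 3.7.

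Accordingly, the paper's proof of the negative direction does not pass through Condition~$\mathbf{(A)}$ at all: it exhibits explicit pointless members over $\mathbb{R}$ (products with a pointless conic; sums-of-squares complete intersections as in Examples~\ref{example:P-2-6} and \ref{example:P-4-1}; double covers branched over real divisors without real points; blowups of the pointless real form of $\mathbb{P}^3$ as in Example~\ref{example:P-1-17}), cites the prequel for the remaining real cases and for four of the five $\mathbb{Q}$-cases, uses the explicit Dai--Xu cubic for \textnumero 1.13, and for \textnumero 1.6, \textnumero 1.7, \textnumero 1.8 deduces pointlessness from the non-unirational forms of Corollary\,A combined with the theorem that a $\Bbbk$-point implies $\Bbbk$-unirationality for these families (Kuznetsov--Prokhorov, and Appendix~\ref{section:V14} for \textnumero 1.7) --- this last step is the only place where the Condition~$\mathbf{(A)}$ analysis feeds into Proposition\,B. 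Note that even where a single variety witnesses both failure of Condition~$\mathbf{(A)}$ and real pointlessness (e.g.\ Example~\ref{example:A-1-3}), the two properties come from independent features, the group action and the sum-of-squares equation; your argument would need the former to imply the latter, and it does not.
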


The paper is organized as follows. Section~\ref{section:main} contains the proof of the Main Theorem, with a detailed case-by-case analyses of deformation families. In Section~\ref{section:auxiliary}, we prove Proposition\,B. Finally, Appendix \ref{section:V14} addresses the unirationality of degree $14$ smooth Fano 3-folds in family \textnumero 1.7, providing a proof that such a 3-fold is $\Bbbk$-unirational if and only if it has a $\Bbbk$-point. Since this paper is the threequel of \cite{AbbanCheltsovKishimotoMangolte-1,AbbanCheltsovKishimotoMangolte-2}, we will use many results obtained in these two papers. 

\medskip
\noindent
\textbf{Acknowledgements.}
We thank Paolo Cascini and Yuri Prokhorov for very fruitful discussions, and we thank Zhijia Zhang for the help with checking the smoothness of the 3-fold constructed in Example~\ref{example:A-1-8}.
Hamid Abban has been supported by the EPSRC grant EP/Y033450/1 and the Royal Society International Collaboration Award ICA$\backslash$1$\backslash$231019.
Ivan Cheltsov has been supported by the Simons Collaboration grant \emph{Moduli of Varieties}. Takashi Kishimoto has been supported by the JSPS KAKENHI Grant Number 23K03047. 
We also thank CIRM, Luminy, for the hospitality provided during a semester-long \emph{Morlet Chair} and for creating a perfect work environment.

\section{Proof of the Main Theorem}
\label{section:main}

Let $X$ be a smooth Fano 3-fold. If $X$ is contained in one of the deformation families
\begin{center}
\textnumero 1.10, \textnumero 1.15, \textnumero 2.9, \textnumero 2.11, \textnumero 2.13, \textnumero 2.14, \textnumero 2.17, \textnumero 2.20, \textnumero 2.22, \textnumero 2.26, \\
\textnumero 2.28, \textnumero 2.30, \textnumero 2.31, \textnumero 2.35, \textnumero 2.36,  \textnumero 3.8, \textnumero 3.11, \textnumero 3.14, \textnumero 3.16, \textnumero 3.18, \\
\textnumero 3.21, \textnumero 3.22, \textnumero 3.23, \textnumero 3.24, \textnumero 3.26, \textnumero 3.29, \textnumero 3.30, \textnumero 4.5, \textnumero 4.9, \textnumero 4.11,
\end{center}
then it follows from \cite{AbbanCheltsovKishimotoMangolte-2} that $X$ satisfies Condition~$\mathbf{(A)}$. 

\begin{lemma}
\label{lemma:A}
Let $A$ be a finite abelian subgroup of the group $\mathrm{Aut}(X)$. 
Suppose that $X$ is contained in one of the~following families: \textnumero 1.11, \textnumero 2.1, \textnumero 2.15, \textnumero 3.15, \textnumero 5.1.
Then $A$ fixes a point in $X$. 
\end{lemma}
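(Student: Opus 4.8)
The plan is to handle the five families by identifying each with an explicit, well-understood model on which the action of $\mathrm{Aut}(X)$ is essentially linear, and then producing a fixed point of the abelian group $A$ by elementary representation-theoretic considerations. The starting observation is that a finite abelian subgroup $A\subseteq\mathrm{Aut}(X)$ acting on any of these Fano 3-folds is automatically simultaneously diagonalizable whenever it acts linearly on an ambient (weighted) projective space or vector space, so the real content is to realize each $X$ inside such a space $A$-equivariantly and then locate an $A$-invariant point lying on $X$.

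Concretely, I would treat the families one at a time using their classical descriptions. Family \textnumero 1.11 is the smooth Fano 3-fold of index $2$ and degree $2\cdot 3^3$ (a cubic-type model), \textnumero 2.1 is the blow-up of a quartic double solid along an elliptic curve (or the corresponding divisorial-contraction model), \textnumero 2.15 is the blow-up of $\mathbb{P}^3$ along a degree-related curve, \textnumero 3.15 is a fiber-product/divisor in $\mathbb{P}^1\times\mathbb{P}^2\times\mathbb{P}^2$-type model, and \textnumero 5.1 is the blow-up of a quadric 3-fold along a degree-$?$ curve. In each case $\mathrm{Aut}(X)$ maps to the automorphism group of a simpler variety $Y$ (the target of a canonical contraction or the image of an anticanonical-type morphism), and $X\to Y$ is $\mathrm{Aut}(X)$-equivariant. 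I would first descend $A$ to $\bar{A}=\mathrm{image}\subseteq\mathrm{Aut}(Y)$, find an $\bar A$-fixed point $y\in Y$ by diagonalizing the linear action on the relevant projective space, and then analyze the fiber of $X\to Y$ over $y$: it is either a single point (done immediately) or a small projective space / quadric on which the residual linear $A$-action again has a fixed point. The contraction structure guarantees that the exceptional behavior is confined to a divisor, and the abelian (hence diagonalizable) action on each such fiber leaves an eigendirection invariant.

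For the cases where $X$ is a hypersurface or complete intersection rather than a blow-up — principally \textnumero 1.11 and \textnumero 3.15 — the argument is even more direct: embed $X$ $A$-equivariantly, decompose the ambient space into $A$-eigenspaces, and show that the defining equation(s), being $A$-semi-invariant, cannot vanish identically to high order at every eigen-coordinate point, so at least one fixed point of $A$ on the ambient space lies on $X$. Here one must use that the anticanonical (or appropriate) linear system is $A$-invariant and that $X$ is the common zero locus of an $A$-stable space of sections; a dimension count on the fixed locus $(\mathbb{P}^N)^A = \bigsqcup \mathbb{P}(V_\chi)$, combined with the smoothness of $X$ and its known degree, forces $X\cap(\mathbb{P}^N)^A\neq\varnothing$.

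The main obstacle will be the blow-up families \textnumero 2.1 and \textnumero 5.1, where the center of the blow-up is a curve $C$ and the residual action on the fiber over a fixed point $y\in Y$ is a priori a nontrivial action of $A$ on the projectivized normal/conormal data; the difficulty is to rule out the case where $A$ permutes the points of $X$ over $y$ without a fixed eigendirection, which would require the fixed point $y$ to lie on $C$ and the induced $A$-action on $\mathbb{P}(N_{C/Y}|_y)\cong\mathbb{P}^1$ to be fixed-point-free — impossible for an abelian group, but one must verify that such a $y\in C^{\bar A}$ actually exists, i.e.\ that the fixed locus of $\bar A$ meets the blow-up center in the right way. I would resolve this by choosing $y\in Y^{\bar A}$ lying off $C$ whenever the fixed locus is large enough (guaranteed by the classification of $\bar A$-actions on $Y$ coming from \cite{AbbanCheltsovKishimotoMangolte-2}), reducing every case to an honest fixed point upstairs.
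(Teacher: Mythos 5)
Your proposal rests on a premise that is false, and its failure is precisely what makes this lemma nontrivial. You assert that a finite abelian subgroup acting linearly on an ambient (weighted) projective space is ``automatically simultaneously diagonalizable,'' hence fixes a point there. But an abelian subgroup $A\subset\mathrm{PGL}_{n+1}(\mathbb{C})$ only lifts to a central extension $\widetilde{A}\subset\mathrm{GL}_{n+1}(\mathbb{C})$, which need not be abelian; when it is not, $A$ can act on $\mathbb{P}^n$ with no fixed points at all (Heisenberg-type actions). The paper itself supplies counterexamples to your claim: Example~\ref{example:A-1-17-2-27-3-25-4-6} gives $(\mathbb{Z}/2\mathbb{Z})^2\subset\mathrm{PGL}_4(\mathbb{C})$ with no fixed points on $\mathbb{P}^3$, and Example~\ref{example:A-2-7} gives $(\mathbb{Z}/2\mathbb{Z})^3$ acting on a smooth quadric $Q\subset\mathbb{P}^4$ with no fixed points. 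Since \textnumero 2.15 consists of blow-ups of $\mathbb{P}^3$ and \textnumero 3.15, \textnumero 5.1 are (iterated) blow-ups of quadrics, your step ``find an $\bar{A}$-fixed point $y\in Y$ by diagonalizing the linear action'' collapses exactly in the cases carrying all the content. (Your fiber analysis over a fixed point $y$ is fine---the stabilizer of $y$ acts honestly linearly on the normal space, so a fixed point upstairs follows---but the argument never gets off the ground without $y$.) Your fallback dimension count for the hypersurface cases is also false: for the Fermat quartic of Example~\ref{example:A-1-2} with $A\simeq(\mathbb{Z}/4\mathbb{Z})^4$, the fixed locus $(\mathbb{P}^4)^A$ consists of the five coordinate points, none of which lies on $X$, so smoothness plus degree does not force $X\cap(\mathbb{P}^N)^A\ne\varnothing$. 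In addition, several of your model identifications are incorrect: \textnumero 2.1 maps equivariantly onto a member of \textnumero 1.11 (the double Veronese cone), not onto a quartic double solid, and \textnumero 3.15 is the blow-up of a quadric along a disjoint line and conic, not a divisor in $\mathbb{P}^1\times\mathbb{P}^2\times\mathbb{P}^2$.

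What is actually needed, and what the paper does, is an argument that kills the central-extension obstruction case by case. For \textnumero 1.11 no representation theory is required: $-K_X\sim 2H$ with $H^3=1$, and the base locus of $|H|$ is a single point, fixed by all of $\mathrm{Aut}(X)$; \textnumero 2.1 reduces to this via an $A$-equivariant morphism onto a member of \textnumero 1.11. For \textnumero 2.15, the paper shows that the curve $C=S_2\cap S_3$ admits an $A$-invariant cubic $S_3$, because $H^0(\mathcal{O}_{\mathbb{P}^3}(3))$ contains a one-dimensional $\widetilde{A}$-subrepresentation; Duncan's lemma then allows one to choose the lift $\widetilde{A}\simeq A$ abelian, and only after that does diagonalization produce a fixed point in $\mathbb{P}^3$. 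For \textnumero 3.15 and \textnumero 5.1, fixed points are found on the $A$-invariant line, respectively on the invariant conic carrying an invariant set of three points, again via Duncan's lemma---the invariant low-degree data is what splits the extension. Finally, fixed points are transported from the base (or from the curve) up to $X$ by the Koll\'ar--Szab\'o result \cite[Proposition~A.4]{ReYou00}, which you never invoke. Without some mechanism to split $\widetilde{A}\to A$ in each case, your outline cannot be repaired as written.
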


\begin{proof}
If $X$ is contained in the family \textnumero 1.11, then $-K_X\sim 2H$ for an ample divisor $H\in\mathrm{Pic}(X)$ with $H^3=1$,
and the base locus of the linear system $|H|$ consists of a single point which must be fixed by the entire automorphism group $\mathrm{Aut}(X)$. In particular, it is fixed by $A$.
Similarly, if $X$ is contained in the deformation family \textnumero 2.1,
then it follows from \cite[Lemma~2.13]{AbbanCheltsovKishimotoMangolte-2} that there exists an $A$-equivariant birational morphism $X\to Y$, where $Y$ is a smooth Fano 3-fold in the family \textnumero 1.11, so $A$ fixes a point in $Y$ and, therefore, it follows from \cite[Proposition~A.4]{ReYou00} that $A$ fixes a point in $X$.

Suppose that $X$ is contained in the family \textnumero 3.15.
Then $X$ is the blowup of a smooth quadric $Q\subset\mathbb{P}^4$ along a disjoint union of a line $\ell$ and a smooth conic,
and it follows from \cite{Matsuki} that this blowup is $A$-equivariant and the line $\ell$ is $A$-invariant, hence Duncan's lemma \cite[Lemma~2.4]{AbbanCheltsovKishimotoMangolte-2} and \cite[Corollary 2.5]{AbbanCheltsovKishimotoMangolte-2} imply that the group $A$ fixes a point in $\ell$.
Therefore, $A$ fixes a point in $X$ by \cite[Proposition~A.4]{ReYou00}.

Next, we assume that $X$ is contained in the family \textnumero 2.15. Then it follows from \cite[Lemma~2.13]{AbbanCheltsovKishimotoMangolte-2} that there exists an $A$-equivariant birational morphism $X\to\mathbb{P}^3$ that blows up a smooth curve $C=S_2\cap S_3$, where $S_2$ is an irreducible quadric surface, and $S_3$ is an irreducible cubic surface. In particular, we may consider $A$ as a subgroup of $\mathrm{PGL}_4(\mathbb{C})$. Note that $C$ is contained in the smooth locus of the surface $S_2$, and $S_2$ is $A$-invariant, because $S_2$ is the unique quadric surface in $\mathbb{P}^3$ that contains $C$. Let $\widetilde{A}$ be a finite subgroup in $\mathrm{GL}_{4}(\mathbb{C})$ that is mapped to $A$ via the natural projection $\mathrm{GL}_{4}(\mathbb{C})\to\mathrm{PGL}_4(\mathbb{C})$. If $\widetilde{A}\simeq A$, then $A$ fixes a point in $\mathbb{P}^3$, so it also fixes a point in $X$ by \cite[Proposition~A.4]{ReYou00}. However, a priori, $\widetilde{A}$ is a central extension of the group $A$, which may not be abelian. In any case, we have the following exact sequence of $\widetilde{A}$-representations:
$$
0\longrightarrow H^0\big(\mathcal{O}_{\mathbb{P}^3}(3)\otimes\mathcal{I}_C\big)\longrightarrow H^0\big(\mathcal{O}_{\mathbb{P}^3}(3)\big) 
\longrightarrow H^0\big(\mathcal{O}_{S_2}(C)\big)\longrightarrow 0,
$$
in which $\mathcal{I}_C$ is the ideal sheaf of $C$. Hence, since $H^0\big(\mathcal{O}_{S_2}(C)\big)$ has a one-dimensional subrepresentation corresponding to $C$, we see that  
$H^0\big(\mathcal{O}_{\mathbb{P}^3}(3)\big)$ also has a corresponding one-dimensional subrepresentation.
Thus, we may choose $S_3$ to be $A$-invariant. Then, it follows from Duncan's lemma \cite[Lemma~2.4]{AbbanCheltsovKishimotoMangolte-2} that we can also choose $\widetilde{A}$ to be isomorphic to $A$, so, as explained earlier, the group $A$ fixes a point in $X$.  

Finally, we assume that $X$ is contained in the family \textnumero 5.1. 
This family contains one smooth member, which can be described as follows.
Let $Q$ be the~smooth quadric 3-fold
$$
\{x_1x_2+x_2x_3+x_3x_1+yz=0\}\subset\mathbb{P}^4,
$$
where $x_1$, $x_2$, $x_3$, $y$, $z$ are coordinates on $\mathbb{P}^4$.
Let $C$ be the~smooth conic in $Q$ that is cut out by  $y=z=0$, and let
$P_1=[1:0:0:0:0]$, $P_2=[0:1:0:0:0]$, $P_3=[0:0:1:0:0]$, all contained in $C$.
Let~$\theta\colon Y\to Q$ be the blowup of the points $P_1$, $P_2$,~$P_3$,
and $\widetilde{C}$ be the~strict transform on $Y$ of the~conic~$C$.
Then there is a birational morphism $\eta\colon X\to Y$ that blows up $\widetilde{C}$.
Note that the group $\mathrm{Aut}(X)$ is explicitly described in \cite[\S~5.23]{Book}, and this description implies that 
$$
\mathrm{Aut}(X)\simeq\mathfrak{S}_3\times\big(\mathbb{C}^\ast\rtimes\mathbb{Z}/2\mathbb{Z}\big),
$$ 
both birational morphism $\theta$ and $\eta$ are $\mathrm{Aut}(X)$-equivariant,
and we have the following $\mathrm{Aut}(X)$-equivariant commutative diagram:
$$
\xymatrix@R=1em{
&& X\ar@{->}[dl]_{\eta}\ar@{->}[dr]^{\eta^\prime}&&\\
&Y\ar@{->}[dl]_{\theta}\ar@{->}[dr]_{\phi}\ar@{-->}[rr]&& Y^\prime\ar@{->}[dl]^{\phi^\prime}\ar@{->}[dr]^{\nu}\\%
Q && V&& \mathbb{P}^1}
$$
where $V$ is the singular K-polystable Fano 3-fold constructed in \cite[\S~6]{one-dim},
$\phi$ is the small contraction of the curve $\widetilde{C}$ to the singular point of $V$,
$\phi^\prime$ is another small resolution of $V$, $\eta^\prime$ is the contraction of the $\eta$-exceptional divisor such that 
the dashed arrow is the Atiyah flop of the curve $\widetilde{C}$, and $\nu$ is a fibration whose general fiber is a sextic del Pezzo surface.
In particular, the group $A$ acts on the conic $C$ such that the subset $\{P_1,P_2,P_3\}$ is $A$-invariant.
Then $A$ fixes a point in $C$ by Duncan's lemma \cite[Lemma~2.4]{AbbanCheltsovKishimotoMangolte-2},
so $A$ fixes a point in $X$ by \cite[Proposition~A.4]{ReYou00}.
\end{proof}

Recall that the deformation families
\begin{center}
\textnumero 2.34, \textnumero 3.27, \textnumero 3.28, \textnumero 4.10, \textnumero 5.3, \textnumero 6.1, \textnumero 7.1, \textnumero 8.1, \textnumero 9.1, \textnumero 10.1
\end{center}
consist of products $\mathbb{P}^1\times S$,
where $S$ is a smooth del Pezzo surface, so their automorphism groups always contain a subgroup isomorphic to $(\mathbb{Z}/2\mathbb{Z})^2$ that acts trivially on the second factor,
and does not fix points. Moreover, it has been shown in \cite{AbbanCheltsovKishimotoMangolte-2} that any smooth member of the families
\textnumero 2.33, \textnumero 3.31, \textnumero 4.8, \textnumero 4.12, \textnumero 5.2
does not satisfy Condition~$\mathbf{(A)}$, and every deformation family among
\begin{center}
\textnumero 1.9, \textnumero 2.5, \textnumero 2.10, \textnumero 2.12, \textnumero 2.16, \textnumero 2.21, \textnumero 2.23, \textnumero 2.24, \\
\textnumero 3.2, \textnumero 3.5, \textnumero 3.6, \textnumero 3.7, \textnumero 3.10, \textnumero 3.12, \textnumero 3.13, \textnumero 4.13,
\end{center}
contains a smooth Fano 3-fold that does not satisfy Condition~$\mathbf{(A)}$.
Hence, to complete the proof of Theorem\,A, we may assume that $X$ is contained in one of the families
\begin{center}
\textnumero 1.1, \textnumero 1.2, \textnumero 1.3, \textnumero 1.4, \textnumero 1.5, \textnumero 1.6, \textnumero 1.7, \textnumero 1.8, \textnumero 1.12, \textnumero 1.13, \\
\textnumero 1.14, \textnumero 1.17, \textnumero 2.2, \textnumero 2.3, \textnumero 2.4, \textnumero 2.6, \textnumero 2.7, \textnumero 2.8, \textnumero 2.18, \textnumero 2.19, \\
\textnumero 2.25, \textnumero 2.27, \textnumero 2.29,  \textnumero 2.32, \textnumero 3.1, \textnumero 3.3, \textnumero 3.4, \textnumero 3.9, \textnumero 3.17,\\ \textnumero 3.19,  \textnumero 3.20, \textnumero 3.25, \textnumero 4.1, \textnumero 4.2,   \textnumero~4.3, \textnumero 4.4, \textnumero 4.6, \textnumero 4.7.
\end{center}
In the remaining part of the section, we will provide examples of $X$ in each of these families together with an abelian group $A$ that does not fix points in $X$.
If every smooth member of the family contains such abelian group, we indicate it for clarity. Note that in some cases, $X$ would be the only smooth member of the family. 

\begin{example}
\label{example:A-1-1-1-12}
Fix $d\in\{2,3\}$. Let $X$ be the hypersurface 
$$
\big\{y^2+x_1^{2d}+x_2^{2d}+x_3^{2d}+x_4^{2d}=0\big\}\subset\mathbb{P}(1_{x_1},1_{x_2},1_{x_3},1_{x_4},d_{y}).
$$
Then $X$ is a smooth Fano 3-fold. If $d=3$, then $X$ belongs to the family \textnumero 1.1. If $d=2$, then $X$ belongs to the family \textnumero 1.12. 
Let $A$ be the subgroup in $\mathrm{Aut}(X)$ generated by the transformations that change signs of the~coordinates $x_1,\ldots,x_4$.
Then $A\simeq(\mathbb{Z}/2\mathbb{Z})^4$ and $A$ does not fix points in $X$.
\end{example}

\begin{example}
\label{example:A-1-2}
Fix $d\in\{2,3,4\}$. Let $X$ be the hypersurface  
$$
\big\{x_1^d+x_2^d+x_3^d+x_4^d+x_5^d=0\big\}\subset\mathbb{P}^4_{x_1,x_2,x_3,x_4,x_5}.
$$
Then $X$ is a smooth Fano 3-fold. If $d=2$, then $X$ is the unique smooth Fano 3-fold in the family \textnumero 1.16.
If $d=3$, then $X$ is contained in the family \textnumero 1.13. 
If $d=4$, then $X$ is contained in the family \textnumero 1.2.
Let $A$ be the subgroup in $\mathrm{Aut}(X)$ generated by the transformations that multiply coordinates $x_1,\ldots,x_4$ by primitive $d$-th root of unity.
Then $A\simeq(\mathbb{Z}/d\mathbb{Z})^4$ and $A$ does not fix points in $X$.
\end{example}

\begin{example}
\label{example:A-1-3}
Let $X$ be the~complete intersection
$$
\Big\{\sum_{i=0}^6x_i=\sum_{i=0}^6x_i^2=\sum_{i=0}^6x_i^3=0\Big\} \subset \mathbb P^6.
$$
Then $X$ is a~smooth Fano 3-fold in the deformation family \textnumero 1.3, and $\mathrm{Aut}(X)\simeq\mathfrak{S}_7$.
Let $A$ be the subgroup in $\mathrm{Aut}(X)$ generated by the following transformations:
\begin{align*}
[x_0:x_1:x_2:x_3:x_4:x_5:x_6]&\mapsto[x_1:x_2:x_0:x_3:x_4:x_5:x_6],\\
[x_0:x_1:x_2:x_3:x_4:x_5:x_6]&\mapsto[x_0:x_1:x_2:x_4:x_5:x_6:x_3].
\end{align*}
Then $A\simeq(\mathbb{Z}/3\mathbb{Z})\times (\mathbb{Z}/4\mathbb{Z})$ and $A$ does not fix points in $X$.
\end{example}

\begin{example}
\label{example:A-1-4}
Let $X$ be the~complete intersection
$$
\Big \{\sum_{i=1}^7x_i^2=\sum_{i=1}^7ix_i^2=\sum_{i=1}^72^ix_i^2=0\Big\} \subset \mathbb{P}^6,
$$
where $x_1,\ldots,x_7$ are coordinates on $\mathbb{P}^6$.
Then $X$ is a smooth Fano 3-fold in the family \textnumero 1.4.
Let $A$ be the subgroup in $\mathrm{Aut}(X)$ generated by the transformations that change signs of the~coordinates $x_1,\ldots,x_6$.
Then $A\simeq(\mathbb{Z}/2\mathbb{Z})^6$ and $A$ does not fix points in $X$.
\end{example}

\begin{example}[Gushel-Mukai 3-folds]
\label{example:A-1-5}
Let $V_5$ be a~smooth intersection of the~Grassmannian $\mathrm{Gr}(2,5)\subset\mathbb{P}^9$ in its Pl\"ucker embedding
with a~linear subspace of codimension $3$. Then $V_5$ is the~unique smooth Fano 3-fold in the deformation family \textnumero 1.15.
Moreover, it is well known that 
$$
\mathrm{Aut}(V_5)\cong\mathrm{PGL}_2(\mathbb{C}).
$$
Let $G$ be a subgroup in $\mathrm{Aut}(V_5)$ such that $G\simeq\mathfrak{A}_5$.
Then it follows from \cite[Theorem~8.2.1]{CheltsovShramov} that $|-K_{V_5}|$ contains a~pencil $\mathcal{P}$ such that every surface of $\mathcal{P}$ is $G$-invariant, $G$ acts faithfully on every surface in $\mathcal{P}$, and general surface in $\mathcal{P}$ is a smooth K3 surface.
Let $S$ be a smooth surface in $\mathcal{P}$,
let $\pi\colon X\to V_5$ be the~double cover branched over $S$,
and let $\tau\in\mathrm{Aut}(X)$ be the Galois involution of this double cover.
Then $X$ is a~smooth Fano 3-fold in the~family \textnumero 1.5.
Since the action of the group $G$ lifts to $X$, we identify $G$ with a subgroup in $\mathrm{Aut}(X)$. 
Observe that 
$$
\langle\tau,G\rangle\simeq (\mathbb{Z}/2\mathbb{Z})\times\mathfrak{A}_5.
$$
Now, let $A^\prime$ be a subgroup in $G$ such that $A^\prime\simeq(\mathbb{Z}/2\mathbb{Z})^2$,
and let $A=\langle\tau,A^\prime\rangle\simeq (\mathbb{Z}/2\mathbb{Z})^3$.
Then $A$ does not fix points in $X$.
Indeed, if $p$ is an $A$-fixed point in $X$, then $\pi(p)$ is an $A^\prime$-fixed point in $S$,
so the length of the $G$-orbit of $\pi(p)$ is $1$, $5$ or $15$,
which contradicts \cite[Lemma~6.7.1]{CheltsovShramov}.
\end{example}

\begin{example}[{\cite{Beauville}}]
\label{example:A-1-6}
Let $X$ be the~smooth Fano 3-fold constructed in \cite[Example~2.11]{Prokhorov-Simple}.
Then $X$ belongs to the~deformation family \textnumero 1.6 and $\mathrm{Aut}(X)\cong\mathrm{SL}_2(\mathbf{F}_8)$,
which is a~simple group. Let $A$ be a Sylow $2$-subgroup in $\mathrm{Aut}(X)$.
Then $A\simeq (\mathbb{Z}/2\mathbb{Z})^3$, and it follows from \cite{Beauville} that $A$ does not fix points in $X$.
\end{example}

\begin{example}
\label{example:A-1-7}
Let $X$ be the smooth Fano 3-fold in the family \textnumero 1.7 such that $\mathrm{Aut}(X)$ contains a subgroup $G\simeq (\mathbb{Z}/3\mathbb{Z})\rtimes\mathfrak{D}_8$ with GAP ID [24,8]
which has been constructed in \cite[\S~5]{TschinkelZhang}.
Then, up to conjugation, the group $G$ contains two abelian subgroups isomorphic to $(\mathbb{Z}/2\mathbb{Z})^2$. One of them is normal, and another one is not normal. Moreover, it follows from \cite[\S~5]{TschinkelZhang} that the non-normal subgroup does not fix points in $X$, which gives another proof of \cite[Proposition~5.1]{TschinkelZhang}.
\end{example}

\begin{example}
\label{example:A-1-8}
Let $V$ be the Lagrangian Grassmannian $\mathrm{LGr}(3,6)$ embedded by Plucker into $\mathbb{P}^{13}$.
Set
$$
X=\left(
  \begin{array}{ccc}
    x_{11} & x_{12} & x_{13} \\
    x_{12} & x_{22} &  x_{23}\\
    x_{13} & x_{23} & x_{33} \\
  \end{array}
\right), Y=\left(
  \begin{array}{ccc}
    y_{11} & y_{12} & y_{13} \\
    y_{12} & y_{22} & y_{23}\\
    y_{13} & y_{23} & y_{33} \\
  \end{array}
\right).
$$
Then it follows from \cite{IlievRanestad} that $V$ is given by $21$ quadratic equations, which can be described as follows:
$$
\left\{\aligned
&\mathrm{adj}(X)=uY, \\
&\mathrm{adj}(Y)=vX, \\
&XY=uvI_3,
\endaligned
\right.
$$
where $\mathrm{adj}(X)$ and $\mathrm{adj}(Y)$ are adjoint matrices of $X$ and $Y$, respectively,
$I_3$ is the $3\times 3$ identity matrix, 
and $u$, $v$, $x_{11}$, $x_{22}$, $x_{33}$, $x_{12}$, $x_{13}$, $x_{23}$, $y_{11}$, $y_{22}$, $y_{33}$, $y_{12}$, $y_{13}$, $y_{23}$ are coordinates on $\mathbb{P}^{13}$.
Let $A$ be the subgroup in $\mathrm{Aut}(V)$ generated by the following involutions:  
\begin{align*}
[u:v:x_{11}:x_{22}:x_{33}:x_{12}:x_{13}:x_{23}:y_{11}:y_{22}:y_{33}:y_{12}:y_{13}:y_{23}]&\mapsto\\ 
[u:v:x_{11}:x_{22}:x_{33}:-x_{12}:x_{13}:-x_{23}:y_{11}:y_{22}:y_{33}:-y_{12}:y_{13}:-y_{23}],&\\
[u:v:x_{11}:x_{22}:x_{33}:x_{12}:x_{13}:x_{23}:y_{11}:y_{22}:y_{33}:y_{12}:y_{13}:y_{23}]&\mapsto\\ 
[u:v:x_{11}:x_{22}:x_{33}:-x_{12}:-x_{13}:x_{23}:y_{11}:y_{22}:y_{33}:-y_{12}:-y_{13}:y_{23}],&\\
[u:v:x_{11}:x_{22}:x_{33}:x_{12}:x_{13}:x_{23}:y_{11}:y_{22}:y_{33}:y_{12}:y_{13}:y_{23}]&\mapsto\\ 
[v:u:y_{11}:y_{22}:y_{33}:y_{12}:y_{13}:y_{23}:x_{11}:x_{22}:x_{33}:x_{12}:x_{13}:x_{23}].&
\end{align*}
Then $A\simeq(\mathbb{Z}/2\mathbb{Z})^3$. Let $X$ be the 3-fold in $V$ that is cut out by 
$$
\left\{\aligned
&1967x_{11}+1973x_{22}+1983x_{33}=0,\\ 
&1967y_{11}+1973y_{22}+1983y_{33}=0,\\
&2024x_{11}+2025x_{22}+2024y_{11}+2025y_{22}=v+u.
\endaligned
\right.
$$
Then $X$ is $A$-invariant smooth Fano 3-fold in the family \textnumero 1.8,
and $A$ fix no points in $X$.
\end{example}

\begin{example}
\label{example:A-1-14}
Let $X$ be a smooth Fano 3-fold in family \textnumero 1.14. Then $X$ is a complete intersection of two quadrics in $\mathbb{P}^5$,
and it follows from \cite{Reid1972} that we can choose coordinates $x_1,x_2,x_3,x_4,x_5,x_6$ on $\mathbb{P}^5$ such that $X$ is given by
$$
\sum_{i=1}^6x_i^2=\sum_{i=1}^6a_ix_i^2=0
$$
for some numbers $a_1,a_2,a_3,a_4,a_5,a_6$.
Let $A$ be the subgroup in $\mathrm{Aut}(X)$ generated by the transformations that change signs of the~coordinates $x_1,\ldots,x_5$.
Then $A\simeq(\mathbb{Z}/2\mathbb{Z})^5$ and $A$ does not fix points in $X$.
\end{example}

\begin{example}
\label{example:A-1-17-2-27-3-25-4-6}
Recall that $\mathbb{P}^3$ is the only smooth Fano 3-fold in the family \textnumero 1.17.
Let $A$ be the subgroup in $\mathrm{Aut}(\mathbb{P}^3_{x_1,x_2,x_3,x_4})$ generated by the following transformations:
\begin{align*}
[x_1:x_2:x_3:x_4]&\mapsto[-x_1:x_2:-x_3:x_4],\\
[x_1:x_2:x_3:x_4]&\mapsto[x_2:x_1:x_4:x_3].
\end{align*}
Then $A\simeq(\mathbb{Z}/2\mathbb{Z})^2$ and $A$ does not fix points in $\mathbb{P}^3$.
Let $Z$ be an $A$-invariant smooth subvariety in $\mathbb{P}^3$, and let $X$ be the blow up of $\mathbb{P}^3$ along $Z$. 
Then the action of $A$ lifts to $X$, and $A$ does not fix points in~$X$. 
Now, choosing appropriate $Z$, we see that all smooth members of the families \textnumero 2.25, \textnumero 2.27, \textnumero 3.25, \textnumero 4.6
also do not satisfy Condition~$\mathbf{(A)}$. 
Namely, if $Z$ is the smooth quartic elliptic curve 
$$
\big\{x_1^2+x_2^2+\lambda(x_3^2+x_4^2)=0, \lambda(x_1^2-x_2^2)+x_3^2-x_4^2=0\big\}\subset\mathbb{P}^3
$$
for $\lambda\not\in\{0,\pm 1,\pm i\}$, then $X$ is a smooth Fano 3-fold in the family \textnumero 2.25,
and every smooth member of this family can be obtained in this way.
Likewise, if $Z$ is the twisted cubic $\varphi(\mathbb{P}^1)$ for $\varphi\colon \mathbb{P}^1\to \mathbb{P}^3$ given by $$
[u:v]\mapsto [uv^2:u^2v:u^3:v^3],
$$
then $X$ is the unique smooth member of the family \textnumero 2.27.
Similarly, if $Z=\{x_1=x_2=0\}\cup\{x_3=x_4=0\}$, then $X$ is the unique smooth member of the family \textnumero 3.25.
Finally, if $Z$ is the union of the three disjoint lines $\{x_1=x_2=0\}$, $\{x_3=x_4=0\}$, $\{x_1+x_3=x_2+x_4=0\}$, then $X$ is the member of the family \textnumero 4.6.
\end{example}

\begin{example}
\label{example:A-2-2-2-18}
Fix $d\in\{2,4\}$. Let
$$
S=\Big\{u^2\big(1967x^{d}+1973y^{d}+1983z^{d}\big)+v^2\big(1983x^{d}+1973y^{d}+1967z^{d}\big)=0\Big\}\subset \mathbb{P}^1_{u,v}\times\mathbb{P}^2_{x,y,z}.
$$
Then $S$ is a smooth surface. Let $\pi\colon X\to\mathbb{P}^1_{u,v}\times\mathbb{P}^2_{x,y,z}$ be a~double cover branched over $S$.
If $d=2$, then $X$ is a~smooth Fano 3-fold in the~deformation family \textnumero 2.18.
If $d=4$, then $X$ is a~smooth Fano 3-fold in the~deformation family \textnumero 2.2.
Let $A^\prime$ be the subgroup in $\mathrm{Aut}(\mathbb{P}^1_{u,v}\times\mathbb{P}^2_{x,y,z})$ generated by 
\begin{align*}
\big([u:v],[x:y:z]\big)&\mapsto\big([u:-v],[x:y:z]\big),\\
\big([u:v],[x:y:z]\big)&\mapsto\big([u:v],[\omega x:y:z]\big),\\
\big([u:v],[x:y:z]\big)&\mapsto\big([u:v],[x:\omega y:z]\big),
\end{align*}
where $\omega$ is a primitive $d$-th root of unity.
Then $A^\prime\simeq (\mathbb{Z}/2\mathbb{Z})\times(\mathbb{Z}/d\mathbb{Z})^2$, the action of the group $A^\prime$ lifts to $X$, so we can identify $A^\prime$ with a subgroup in $\mathrm{Aut}(X)$.  Let $\tau$ be the~Galois involution of the~double cover~$\pi$, and let $A=\langle\tau,A^\prime\rangle$. Then $A\simeq(\mathbb{Z}/2\mathbb{Z})^2\times(\mathbb{Z}/d\mathbb{Z})^2$, and $A$ does not fix points in $X$.
\end{example}

\begin{example}
\label{example:A-2-3}
Let us use the notation and assumptions of Example~\ref{example:A-1-1-1-12} with $d=2$. 
Let $C$ be the smooth elliptic curve in $X$ that is cut out by $x_3=x_4=0$, 
and let $Y$ be the blow up of $X$ along $C$. Then $Y$ is a~smooth Fano 3-fold in the~family \textnumero 2.3, 
and the curve $C$ is $A$-invariant, so the action of the group $A$ lifts to $Y$. Note that $A$ does not fix points in $Y$.
\end{example}

\begin{example}
\label{example:A-2-4}
Let $C$ be the~curve in $\mathbb{P}^3$ that is given by
$$
\left\{\aligned
&x_1^3+x_2^3+\lambda(x_3^3+x_4^3)=0,\\
&\lambda(x_1^3-x_2^3)+x_3^3-x_4^3=0,\\
\endaligned
\right.
$$
where $\lambda$ is a general complex number. Then $C$ is a~smooth curve.
Let $X\to\mathbb{P}^3$ be a~blow up of this curve.
Then $X$ is a~smooth Fano 3-fold in the family \textnumero 2.4.
Let $A$ be the subgroup in $\mathrm{Aut}(\mathbb{P}^3)$ generated by 
\begin{align*}
[x_1:x_2:x_3:x_4]&\mapsto[x_2:x_1:x_4:x_3],\\
[x_1:x_2:x_3:x_4]&\mapsto[x_3:-x_4:x_1:-x_2].
\end{align*}
Then $A\simeq(\mathbb{Z}/2\mathbb{Z})^2$, and $A$ does not fix points in $\mathbb{P}^3$.
Note that the curve $C$ is $A$-invariant, so its action lifts to $X$, and $A$ does not fix points in $X$.
\end{example}

\begin{example}[Verra 3-folds]
\label{example:A-2-6}
Let $X$ be the divisor of degree $(2,2)$ in $\mathbb{P}^2_{u,v,w}\times\mathbb{P}^2_{x,y,z}$ that is given by 
$$
vwx^2+uwy^2+uvz^2+yzu^2+xzv^2+xyw^2+\lambda(u^2x^2+v^2y^2+w^2z^2)=0,
$$
where $\lambda$ is a sufficiently general complex number. Then $X$ is a smooth Fano 3-fold in the family \textnumero 2.6.
Let $A$ be the subgroup in $\mathrm{Aut}(X)$ that is generated by the following transformations:
\begin{align*}
\big([u:v:w],[x:y:z]\big)&\mapsto\big([v:w:u],[y:z:x]\big),\\
\big([u:v:w],[x:y:z]\big)&\mapsto\big([u:\omega v:\omega^2w],[\omega^2x:\omega y:z]\big),
\end{align*}
where $\omega$ is a primitive cube root of unity. 
Then $A\simeq(\mathbb{Z}/3\mathbb{Z})^2$, and $A$ does not fix points in $X$.
\end{example}

\begin{example}
\label{example:A-2-7}
Let $Q$ be the smooth quadric 3-fold $\{x_1^{2}+x_2^{2}+x_3^{2}+x_4^{2}+x_5^{2}=0\}\subset\mathbb{P}^4_{x_1,x_2,x_3,x_4,x_5}$,
and let $A$ be the subgroup in $\mathrm{Aut}(Q)$ generated by the following involutions:
\begin{align*}
[x_1:x_2:x_3:x_4:x_5]&\mapsto[-x_1:x_2:x_3:x_4:x_5]\\
[x_1:x_2:x_3:x_4:x_5]&\mapsto[x_1:-x_2:-x_3:x_4:x_5]\\
[x_1:x_2:x_3:x_4:x_5]&\mapsto[x_1:-x_2:x_3:-x_4:x_5].
\end{align*}
Then $A\simeq(\mathbb{Z}/2\mathbb{Z})^3$.
Let $Z$ be a smooth $A$-invariant subvariety of the quadric $Q$, and let $X$ be the blow up of $Q$ along $Z$. Then the action of $A$ lifts to $X$, and $A$ does not fix points in $X$.
Now, choosing appropriate $Z$, we obtain smooth Fano 3-folds  of the families \textnumero 2.7, \textnumero 2.29, \textnumero 3.19, \textnumero 3.20
that do not satisfy Condition~$\mathbf{(A)}$. Namely, if $Z$ is the smooth curve of genus $5$ that is cut out by
$$
\sum_{i=1}^{5}ix_i^2=\sum_{i=1}^{5}2^ix_i^2=0,
$$
then $X$ is a smooth Fano 3-fold in the family \textnumero 2.7.
Similarly, if $Z$ is the conic $Q\cap\{x_3=x_4=0\}$, then $X$ is the~unique smooth member of the~family \textnumero 2.29.
If
$$
Z=\{x_1=x_2+ix_3=x_4+ix_5=0\}\cup \{x_1=x_2-ix_3=x_4-ix_5=0\},
$$
then $X$ is the unique smooth Fano 3-fold in the family \textnumero 3.20.
Finally, we let
$$
Z=[0:0:0:1:i]\cup [0:0:0:1:-i].
$$
Then $X$ is the unique member of the family \textnumero 3.19.
Let $C$ be the $A$-invariant conic $Q\cap\{x_3=x_4=0\}$,
and let $Y$ be the blow up of $X$ along the strict transform of $C$. Then $Y$ is the unique smooth Fano 3-fold in the family \textnumero 4.4, the action of $A$ lifts to $Y$, and $A$ does not fix points in $Y$.
\end{example}

\begin{example}
\label{example:A-2-8}
Let $Y$ be the hypersurface 
$$
\{w^2+t^2\big(x^2+y^2+z^2\big)+x^4+y^4+z^4=0\}\subset\mathbb{P}(1_{x},1_{y},1_{z},1_{t},2_{w}).
$$
Then $\mathrm{Sing}(Y)=[0:0:0:1:0]$, and $Y$ has an ordinary double singularity at the point $[0:0:0:1:0]$.
Let $\pi\colon X\to Y$ be the blow up of this point. Then $X$ is a smooth Fano 3-fold in the~family \textnumero 2.8.
Let $A$ be the subgroup in $\mathrm{Aut}(Y)$ generated by the transformations that change signs of the~coordinates $x,y,z,t$.
Then $A\simeq(\mathbb{Z}/2\mathbb{Z})^4$, the action of the group $A$ lifts to $X$, and $A$ does not fix points in $X$.
\end{example}

\begin{example}
\label{example:A-2-19}
Let $S_2$ be the surface $\{x_0x_3=x_1x_2\}\subset\mathbb{P}^3$.
Fix the isomorphism $\mathbb{P}^1\times\mathbb{P}^1\simeq S_2$ by $([s_0:s_1],[t_0:t_1])\mapsto[s_0t_0:s_0t_1:s_1t_0:s_1t_1]$, and let $C$ be the curve in $S_2$ given by
$$
(s_0^2+s_1^2)(t_0^3+t_1^3)+\varepsilon (s_0^2-s_1^2)(t_0^3-t_1^3)=0
$$
in which $\varepsilon$ is a general number so that $C$ is smooth.
Let $A$ be the subgroup in $\mathrm{Aut}(S_2)$ generated by
\begin{align*}
\big([s_0:s_1],[t_0:t_1]\big)&\mapsto\big([s_0:-s_1],[t_0:t_1]\big),\\
\big([s_0:s_1],[t_0:t_1]\big)&\mapsto\big([s_1:s_0],[t_1:t_0]\big).
\end{align*}
Then $A\simeq(\mathbb{Z}/2\mathbb{Z})^2$, the~curve $C$ is $A$-invariant,
and the $A$-action extends to $\mathbb{P}^3$ as follows:
\begin{align*}
[x_0:x_1:x_2:x_3]&\mapsto[-x_0:-x_1:x_2:x_3],\\
[x_0:x_1:x_2:x_3]&\mapsto[x_3:x_2:x_1:x_0].
\end{align*}
Hence, $A$ fixes no points in $\mathbb{P}^3$.
Let $\pi\colon X\to\mathbb{P}^3$ be the blowup of the curve $C$. 
Then $X$ is a smooth Fano 3-fold in the family \textnumero 2.19, 
the $A$-action lifts to $X$, and $A$ does not fix points in $X$.
\end{example}

\begin{example}
\label{example:A-2-32-4-7}
Let $W=\{x_1y_1+x_2y_2+x_3y_3=0\}\subset\mathbb{P}^2_{x_1,x_2,x_3}\times\mathbb{P}^2_{y_1,y_2,y_3}$.
Then $W$ is the unique smooth Fano 3-fold in the family \textnumero 2.32.
Now, let $A$ be the~subgroup in $\mathrm{Aut}(W)$ generated by 
\begin{align*}
([x_1:x_2:x_3],[y_1:y_2:y_3])&\mapsto([x_2:x_3:x_1],[y_2:y_3:y_1]),\\
([x_1:x_2:x_3],[y_1:y_2:y_3])&\mapsto([\omega^2x_1:\omega x_2:x_3],[\omega y_1:\omega^2y_2:y_3]),
\end{align*}
where $\omega$ is a primitive cube root of unity. Then $A\simeq(\mathbb{Z}/3\mathbb{Z})^2$, and $A$ fixes no points in $W$.
Alternatively, consider the subgroup $A^\prime\subset\mathrm{Aut}(W)$ such that $A^\prime\simeq(\mathbb{Z}/2\mathbb{Z})^2$ and $A^\prime$ is generated by \begin{align*}
([x_1:x_2:x_3],[y_1:y_2:y_3])&\mapsto([-x_1:x_2:x_3],[-y_1:y_2:y_3]),\\
([x_1:x_2:x_3],[y_1:y_2:y_3])&\mapsto([x_1:-x_2:x_3],[y_1:-y_2:y_3]).
\end{align*} 
Then $A^\prime$ also fixes no points in $W$.
Let $C_1=\{x_1=y_2=y_3=0\}$ and  $C_2=\{y_1=x_2=x_3=0\}$. Then the curve $C_1+C_2$ is smooth and $A^\prime$-invariant.
Let $X\to W$ be the blow up of this curve. Then $X$ is the unique smooth Fano 3-fold in the family \textnumero 4.7,
the action of the group $A^\prime$ lifts to $X$, and $A^\prime$ does not fix points in $X$.
\end{example}

\begin{example}
\label{example:A-3-1}
Let $S$ be the surface of degree $(2,2,2)$ in $\mathbb{P}^1_{x_1,y_1}\times\mathbb{P}^1_{x_2,y_2}\times\mathbb{P}^1_{x_3,y_3}$ that is given by  
\begin{multline*}
\quad\quad\quad \quad \quad x_1^2x_2y_2x_3^2+y_1^2x_2y_2y_3^2+x_1^2x_2^2x_3y_3+x_1y_1x_2^2x_3^2++y_1^2y_2^2x_3y_3+x_1y_1y_2^2y_3^2=\\
=2025(y_1^2y_2^2x_3y_3+x_1y_1y_2^2y_3^2+y_1^2x_2y_2y_3^2+x_1^2x_2^2x_3y_3+ x_1^2x_2x_3^2y_2+x_1x_2^2x_3^2y_1).\quad\quad\quad\quad\quad
\end{multline*}
Then $S$ is smooth. Let $\pi\colon X\to \mathbb{P}^1_{x_1,y_1}\times\mathbb{P}^1_{x_2,y_2}\times\mathbb{P}^1_{x_3,y_3}$ be the double cover that is ramified in $S$.
Then $X$ is a smooth Fano 3-fold in the family \textnumero 3.1.
Let $A^\prime$ be the subgroup in $\mathrm{Aut}(\mathbb{P}^1_{x_1,y_1}\times\mathbb{P}^1_{x_2,y_2}\times\mathbb{P}^1_{x_3,y_3})$ generated by 
\begin{align*}
([x_1:y_1],[x_2:y_2],[x_3:y_3])&\mapsto([y_1:x_1],[y_2:x_2],[y_3:x_3]),\\
([x_1:y_1],[x_2:y_2],[x_3:y_3])&\mapsto([x_2:y_2],[x_3:y_3],[x_1:y_1]).
\end{align*}
Then $A^\prime\simeq(\mathbb{Z}/2\mathbb{Z})\times (\mathbb{Z}/3\mathbb{Z})$, the surface $S$ is $A^\prime$-invariant, and $A^\prime$ does not fix points in $S$.
Observe that the action of the group $A^\prime$ lifts to $X$, so we can consider $A^\prime$ as a subgroup in $\mathrm{Aut}(X)$.
Let $\tau$ be the Galois involution of $\pi$, and let $A=\langle\tau,A\rangle$.
Then $A\simeq(\mathbb{Z}/2\mathbb{Z})^2\times (\mathbb{Z}/3\mathbb{Z})$, and $A$ does not fix points in $X$.
\end{example}

\begin{example}
\label{example:A-3-3}
Let $X$ be the~3-fold in $\mathbb{P}^3_{x_1,y_1,z_1,w_1}\times\mathbb{P}^3_{x_2,y_2,z_2,w_2}$ given by
$$
\left\{\aligned
&x_1x_2^2+y_1y_2^2+z_1z_2^2+w_1w_2^2=0, \\
&x_1^2+y_1^2+z_1^2+w_1^2=0, \\
&x_2+y_2+z_2+w_2=0.
\endaligned
\right.
$$
Then $X$ is smooth Fano 3-fold \textnumero 3.3.
Let $A$ be the subgroup in $\mathrm{Aut}(X)$ generated by
\begin{align*}
([x_1:y_1:z_1:w_1],[x_2:y_2:z_2:w_2])&\mapsto([y_1:x_1:w_1:z_1],[y_2:x_2:w_2:z_2]),\\
([x_1:y_1:z_1:w_1],[x_2:y_2:z_2:w_2])&\mapsto([w_1:z_1:y_1:x_1],[w_2:z_2:y_2:x_2]).
\end{align*}
Then $A\simeq(\mathbb{Z}/2\mathbb{Z})^2$, and $A$ does not fix points in $X$. 
\end{example}

\begin{example}
\label{example:A-3-4}
Let us use the notation and assumptions of Example~\ref{example:A-2-2-2-18} with $d=2$.
Let $C$ be the preimage via $\pi$ of the curve $\{y=0,z=0\}\subset \mathbb{P}^1_{u,v}\times\mathbb{P}^2_{x,y,z}$.
Then $C$ is smooth and $A$-invariant. Let $Y$ be the blow up of the 3-fold $X$ along the curve $C$.
Then $Y$ is a smooth Fano 3-fold in the family \textnumero 3.4, and the action of the group $A$ lifts to $Y$.
Since $A$ does not fix points in $X$, we see that $A$ does not fix points in $Y$.
\end{example}

\begin{example}
\label{example:A-3-9}
Let $S$, $E$, $E^\prime$ be surfaces in $\mathbb{P}^1_{u,v}\times \mathbb{P}^2_{x,y,z}$ defined as follows: 
$$
S=\{x^4+y^4+z^4+2025(x^2y^2+x^2z^2+y^2z^2)=0\},\quad E=\{u-iv=0\}, \quad E^\prime=\{u+iv=0\}.
$$
Then $S$, $E$, $E^\prime$ are smooth.
Let $A^\prime$ be the subgroup in $\mathrm{Aut}(\mathbb{P}^1_{u,v}\times \mathbb{P}^2_{x,y,z})$ generated by 
\begin{align*}
\big([u:v],[x:y:z]\big)&\mapsto \big([u:v],[-x:y:z]\big),\\
\big([u:v],[x:y:z]\big)&\mapsto \big([u:v],[x:-y:z]\big),\\
\big([u:v],[x:y:z]\big)&\mapsto \big([v:u],[x:y:z]\big).
\end{align*}
Then $A^\prime\simeq(\mathbb{Z}/2\mathbb{Z})^3$, and $S+E+E^\prime$ is $A^\prime$-invariant.
Let $\eta\colon\overline{X}\to \mathbb{P}^1_{u,v} \times \mathbb{P}^2_{x,y,z}$ be a~double cover branched over $S+E+E^\prime$,
and let $\overline{S}$, $\overline{E}$, $\overline{E}^\prime$ be the~preimages on $\overline{X}$ of the~surfaces $S$, $E$, $E^\prime$, respectively.
Then~the action of the group $A^\prime$ lifts to $\overline{X}$, so we consider $A^\prime$ as a subgroup in $\mathrm{Aut}(\overline{X})$.
Let $\tau$ be the~Galois involution of the~double cover $\eta$,
and let $A=\langle A^\prime,\tau\rangle$. Then $A\simeq(\mathbb{Z}/2\mathbb{Z})^4$,
and $A$ does not fix points in $\overline{X}$.
Note that $\overline{X}$ is singular along the curves $\overline{E}\cap\overline{S}$ and $\overline{E}^\prime\cap\overline{S}$.
But we can $A$-equivariantly blow up $\overline{X}$ along these curves to get a smooth 3-fold $\widehat{X}$.
Then there exists an~$A$-equivariant birational morphism $\widehat{X}\to X$ that contracts the strict transform of $\overline{S}$ to a~smooth curve of genus $3$, 
and $X$ is a smooth Fano 3-fold in the family \textnumero 3.9.
By construction, the group $A$ does not fix points in $X$.
\end{example}

\begin{example}
\label{example:A-3-17}
Let $X$ be the~unique smooth Fano 3-fold in the deformation family \textnumero 3.17.
Then 
$$
X=\big\{x_0y_0z_2+x_1y_1z_0=x_0y_1z_1+x_1y_0z_1\big\}\subset\mathbb{P}^1_{x_0,x_1}\times\mathbb{P}^1_{y_0,y_1}\times\mathbb{P}^2_{z_0,z_1,z_2},
$$
Let $A$ be the subgroup in $\mathrm{Aut}(X)$ generated by the following transformation:
\begin{align*}
\big([x_0:x_1],[y_0:y_1],[z_0:z_1:z_2]\big)&\mapsto\big([y_0:y_1],[x_0:x_1],[z_0:z_1:z_2]\big),\\
\big([x_0:x_1],[y_0:y_1],[z_0:z_1:z_2]\big)&\mapsto\big([x_1:x_0],[y_1:y_0],[z_2:z_1:z_0]\big),\\
\big([x_0:x_1],[y_0:y_1],[z_0:z_1:z_2]\big)&\mapsto\big([x_0:-x_1],[y_0:-y_1],[z_0:-z_1:z_2]\big).
\end{align*}
Then $A\simeq(\mathbb{Z}/2\mathbb{Z})^3$ and $A$ does not fix points in $X$.
\end{example}

\begin{example}
\label{example:A-4-1}
Let $X$ be a smooth Fano 3-fold in the family \textnumero 4.1. It follows from \cite{CheltsovFedorchukFujitaKaloghiros} that $X$ can be given in $\mathbb{P}^1_{x_1,y_1}\times\mathbb{P}^1_{x_2,y_2}\times\mathbb{P}^1_{x_3,y_3}\times\mathbb{P}^1_{x_4,y_4}$ by the~equation
\begin{multline*}
\quad\quad\quad \quad \quad \quad \quad \quad a\big(x_1x_2x_3x_4+y_1y_2y_3y_4\big)
+b\big(x_1x_2y_3y_4+y_1y_2x_3x_4\big)+\\
+c\big(x_1y_2x_3y_4+y_1x_2y_3x_4\big)
+d\big(x_1y_2y_3x_4+y_1x_2x_3y_4\big)=0\quad \quad \quad \quad \quad \quad \quad\quad
\end{multline*}
for some  numbers $a,b,c,d$. Let $A$ be the subgroup in $\mathrm{Aut}(X)$ generated by the~following transformations:
\begin{align*}
\big([x_1:y_1],[x_2:y_2],[x_3:y_3],[x_4:y_4]\big)&\mapsto\big([x_2:y_2],[x_1:y_1],[x_4:y_4],[x_3:y_3]\big),\\
\big([x_1:y_1],[x_2:y_2],[x_3:y_3],[x_4:y_4]\big)&\mapsto\big([x_4:y_4],[x_3:y_3],[x_2:y_2],[x_1:y_1]\big),\\
\big([x_1:y_1[,[x_2:y_2],[x_3:y_3],[x_4:y_4]\big)&\mapsto\big([y_1:x_1],[y_2:x_2],[y_3:x_3],[y_4:x_4]\big),\\
\big([x_1:y_1],[x_2:y_2],[x_3:y_3],[x_4:y_4]\big)&\mapsto\big([x_1:-y_1],[x_2:-y_2],[x_3:-y_3],[x_4:-y_4]\big).
\end{align*}
Then $A\simeq(\mathbb{Z}/2\mathbb{Z})^4$ and $A$ does not fix points in $X$.
\end{example}

\begin{example}
\label{example:A-4-2}
Let $X$ be any smooth Fano 3-fold in the family \textnumero 4.2.
Then there exists a birational morphism $\pi\colon X\to Q$ such that 
$Q$ is the cone $\{x_1^2+x_2^2+x_3^2+x_4^2=0\}\subset\mathbb{P}^4_{x_1,x_2,x_3,x_4,x_5}$,
and $\pi$ is a blow up of the point $[0:0:0:0:1]=\mathrm{Sing}(Q)$ and the smooth elliptic curve
$$
C=\{x_5=a_1x_1^2+a_2x_2^2+a_3x_3^2+a_4x_4^2=x_1^2+x_2^2+x_3^2+x_4^2=0\}\subset Q\setminus [0:0:0:0:1],
$$ 
where $a_1,a_2,a_3,a_4$ are some numbers. Let $A$ be the subgroup in $\mathrm{Aut}(Q)$ generated by the transformations that change signs of the~coordinates $x_1,\ldots,x_4$. Then $A\simeq(\mathbb{Z}/2\mathbb{Z})^4$, $[0:0:0:0:1]$ is the only $A$-fixed  point in $Q$,
and $C$ is $A$-invariant, so the $A$-action lifts to $X$. 
Moreover, $A$ does not fix points in $X$.
\end{example}

\begin{example}
\label{example:A-4-3}
Let $C$ be the~curve of degree $(1,1,2)$ in $\mathbb{P}^1_{x_0,x_1}\times\mathbb{P}^1_{y_0,y_1}\times\mathbb{P}^1_{z_0,z_1}$ given by
$$
\left\{\aligned
&x_0y_1-x_1y_0=0,\\
&x_0^2z_1+x_1^2z_0=0.
\endaligned
\right.
$$
Then $C$ is smooth and irreducible.
Let~$\pi\colon X\to \mathbb{P}^1\times\mathbb{P}^1\times\mathbb{P}^1$ be the~blow up of the curve $C$.
Then $X$ is the~unique smooth Fano 3-fold~\textnumero~4.3.
Let $A$ be the~subgroup of $\mathrm{Aut}(X)$ generated by 
\begin{align*}
\big([x_0:x_1],[y_0:y_1],[z_0:z_1]\big)&\mapsto\big([x_1:x_0],[y_1:y_0],[z_1:z_0]\big),\\
\big([x_0:x_1],[y_0:y_1],[z_0:z_1]\big)&\mapsto\big([y_0:y_1],[x_0:x_1],[z_0:z_1]\big),\\
\big([x_0:x_1],[y_0:y_1],[z_0:z_1]\big)&\mapsto\big([x_0:-x_1],[y_0:-y_1],[z_0:z_1]\big).
\end{align*}
Then $A\simeq(\mathbb{Z}/2\mathbb{Z})^3$, and $A$ does not fix points in $X$.
\end{example}

\section{Proof of Proposition\,B}
\label{section:auxiliary}

Let $X$ be a smooth Fano 3-fold defined over a subfield $\Bbbk\subset\mathbb{C}$. 
If $X$ is contained in the family \textnumero 1.15, it follows from \cite[Theorem~1.1]{KuznetsovProkhorov2023} that $X(\Bbbk)\ne\varnothing$.
Similarly, if $X$ is contained in one of the families
\begin{center}
\textnumero 2.9, \textnumero 2.11, \textnumero 2.14, \textnumero 2.17, \textnumero 2.20, \textnumero 2.22, \textnumero 2.26, \textnumero 2.28, \textnumero 2.30,\\ 
\textnumero 2.31, \textnumero 2.35, \textnumero 2.36, \textnumero 3.8, \textnumero 3.11, \textnumero 3.14, \textnumero 3.16,  \textnumero 3.18, \textnumero 3.21, \\
\textnumero 3.22, \textnumero 3.23, \textnumero 3.24, \textnumero 3.26, \textnumero 3.29, \textnumero 3.30, \textnumero 4.5, \textnumero 4.9, \textnumero 4.11,
\end{center}
then it follows from \cite{AbbanCheltsovKishimotoMangolte-1} that $X(\Bbbk)\ne\varnothing$. Likewise, we prove the following result. 

\begin{lemma}
\label{lemma:P}
If $X$ is contained in one of the families \textnumero 1.11, \textnumero 2.1, \textnumero 2.15, \textnumero 3.15, \textnumero 5.1,
then  $X(\Bbbk)\ne\varnothing$.
\end{lemma}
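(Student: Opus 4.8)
The plan is to mirror the proof of Lemma~\ref{lemma:A}, replacing each step of the form ``the finite abelian group $A$ fixes a point'' by a step of the form ``there is a $\Bbbk$-point''. The unifying principle is that the canonical (automorphism-invariant) subvarieties and extremal contractions used there are also Galois-invariant, hence defined over $\Bbbk$: in each family they are the unique structures of their numerical type, and when two pieces of the same nature occur they are distinguished by discrete invariants (degree, genus), so the Galois group cannot interchange them. I will also use the arithmetic analogue of \cite[Proposition~A.4]{ReYou00}: if $\pi\colon X\to Y$ is the blowup of a smooth $\Bbbk$-subvariety $Z\subset Y$ of codimension $c$, with $X$, $Y$ smooth projective over $\Bbbk$ and $Y(\Bbbk)\ne\varnothing$, then $X(\Bbbk)\ne\varnothing$; indeed a point $p\in Y(\Bbbk)$ either avoids $Z$, so $\pi^{-1}(p)$ is a $\Bbbk$-point, or lies on $Z$, so $\pi^{-1}(p)\cong\mathbb{P}^{c-1}_\Bbbk$ carries $\Bbbk$-points.

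For family \textnumero 1.11 the class $H=-\tfrac12K_X$ is the positive generator of $\mathrm{Pic}(X)\cong\mathbb{Z}$, hence Galois-invariant, so the linear system $|H|$ and its base locus are defined over $\Bbbk$; since over $\mathbb{C}$ this base locus is a single reduced point, it is a degree-$1$ closed point, i.e.\ a $\Bbbk$-point, giving $X(\Bbbk)\ne\varnothing$. For \textnumero 2.1 and \textnumero 2.15 the two extremal rays of $X$ have distinct numerical types and are therefore individually Galois-invariant, so the divisorial contraction to a member $Y$ of \textnumero 1.11 (respectively to $\mathbb{P}^3$) is defined over $\Bbbk$; as $Y(\Bbbk)\ne\varnothing$ by the \textnumero 1.11 case and $\mathbb{P}^3(\Bbbk)\ne\varnothing$ trivially, the lifting principle above yields $X(\Bbbk)\ne\varnothing$.

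Families \textnumero 3.15 and \textnumero 5.1 require more than a blowdown to the quadric $Q$, since a smooth quadric $3$-fold can be pointless over $\Bbbk$ (for instance over $\mathbb{R}$). For \textnumero 3.15 I would use that the center of $\eta\colon X\to Q$ is a disjoint union of a line $\ell$ and a conic of different degrees, so each exceptional divisor is Galois-invariant and $\ell$ is defined over $\Bbbk$; being a \emph{linear} $\mathbb{P}^1\subset\mathbb{P}^4$, the line $\ell$ is isomorphic to $\mathbb{P}^1_\Bbbk$ and has $\Bbbk$-points, which lift to $X$ by the blowup principle. For \textnumero 5.1 the morphisms $\theta$, $\eta$ and the conic $C$ in the diagram of Lemma~\ref{lemma:A} are canonical, hence defined over $\Bbbk$, and the three points $P_1,P_2,P_3$ (the centers of the divisorial contraction $\theta$) form a Galois-stable set, i.e.\ a $\Bbbk$-rational effective divisor of degree $3$ on the smooth genus-$0$ curve $C$. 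Combined with the $\Bbbk$-rational anticanonical divisor of degree $2$ on $C$, this produces a $\Bbbk$-rational divisor class of degree $1$, hence by Riemann--Roch on a genus-$0$ curve a $\Bbbk$-point of $C$, which lifts to $X$.

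The main obstacle is precisely families \textnumero 3.15 and \textnumero 5.1: one cannot simply descend to the quadric and invoke that it has a $\Bbbk$-point, so the argument must extract a $\Bbbk$-point from the finer birational data -- the genuine line $\ell$ in one case, and the odd-degree $\Bbbk$-divisor $\{P_1,P_2,P_3\}$ on the conic $C$ in the other. Verifying that these auxiliary subvarieties and contractions are genuinely Galois-invariant (not merely invariant under the complex automorphism group) is the technical heart of the proof and rests on the numerical distinctness of the extremal rays in the Mori--Mukai description of each family.
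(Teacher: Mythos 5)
Your treatment of families \textnumero 1.11, \textnumero 2.1, \textnumero 3.15 and \textnumero 5.1 is essentially sound and close to the paper's own argument: for \textnumero 1.11 the base point of $|H|$ is canonical, hence Galois-invariant and $\Bbbk$-rational; for \textnumero 2.1 one contracts onto a ($\Bbbk$-form of a) member of \textnumero 1.11 and applies the previous case; for \textnumero 3.15 the Galois-invariant line carries a $\Bbbk$-point; and for \textnumero 5.1 your Riemann--Roch argument on the conic (the odd-degree $\Bbbk$-rational divisor $P_1+P_2+P_3$ plus the degree $-2$ canonical class gives a degree-1 class, which is effective on a genus-0 curve) is a correct and slightly more elementary substitute for the paper's appeal to Springer's theorem applied to the quadric $Q$ and the length-3 subscheme $\Sigma$. (Your ``canonicity implies Galois-invariance'' assertions are where the paper instead cites Matsuki and the descent results of \cite{AbbanCheltsovKishimotoMangolte-1}, e.g.\ to know $Q$ is honestly a quadric hypersurface over $\Bbbk$; that is acceptable, if informal.)

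The genuine gap is family \textnumero 2.15. You assert that the second extremal contraction maps $X$ to $\mathbb{P}^3$ and that ``$\mathbb{P}^3(\Bbbk)\ne\varnothing$ trivially''. But Galois-invariance of the extremal ray only gives a contraction $\pi\colon X\to U$ defined over $\Bbbk$ with $U_{\mathbb{C}}\simeq\mathbb{P}^3$; that is, $U$ is a priori a $\Bbbk$-form of $\mathbb{P}^3$, i.e.\ a Severi--Brauer 3-fold, and nontrivial Severi--Brauer varieties have no $\Bbbk$-points. You flagged exactly this danger for the quadric in \textnumero 3.15 and \textnumero 5.1, but overlooked that forms of projective space itself are the most classical pointless forms. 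This case is the crux of the lemma, and the paper devotes most of its proof to it: writing $C_{\mathbb{C}}=S_2\cap S_3$ in $U_{\mathbb{C}}\simeq\mathbb{P}^3$, it notes that $S_2$ (the unique quadric through $C$) and $C$ are defined over $\Bbbk$, uses the linear system of cubic surfaces through $C$ to produce a $\Bbbk$-birational map $U\dasharrow Y$ onto a cubic 3-fold $Y\subset\mathbb{P}^4$ defined over $\Bbbk$ with a single isolated double point, and then projects from that node (a $\Bbbk$-point, being the unique singular point) to obtain a birational map $Y\dasharrow\mathbb{P}^3$ over $\Bbbk$; only then does Lang--Nishimura, applied to the composite birational map $\mathbb{P}^3\dasharrow X$, give $X(\Bbbk)\ne\varnothing$ --- in effect proving a posteriori that the form $U$ is trivial. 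Without an argument of this kind, which manufactures a $\Bbbk$-point on the Severi--Brauer target rather than assuming one, your proof of the \textnumero 2.15 case does not go through.
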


\begin{proof}
If $X$ is contained in the family \textnumero 1.11, then $-K_{X_{\mathbb{C}}}\sim 2H$ for an ample divisor $H\in\mathrm{Pic}(X_{\mathbb{C}})$ such that $H^3=1$,
and the base locus of the linear system $|H|$ consists of a single point, which must be defined over $\Bbbk$.
so, in particular, $X(\Bbbk)\ne\varnothing$.
Similarly, if $X$ is contained in the deformation family \textnumero 2.1,
then it follows from \cite[Lemma 2.5]{AbbanCheltsovKishimotoMangolte-1} that there exists birational morphism $X\to Y$
such that $Y$ is a smooth member of the deformation family \textnumero 1.11, so it follows from Lang-Nishimura theorem that $X(\Bbbk)\ne\varnothing$,
because we just proved that $Y(\Bbbk)\ne\varnothing$.

If $X$ is contained in the deformation family \textnumero 3.15,
then $X_{\mathbb{C}}$ can be realized as a blow up of a smooth quadric $Q\subset\mathbb{P}^4$ along a disjoint union of a line $\ell$ and a smooth conic $C$,
and it follows from \cite{Matsuki} and \cite[Corollary 2.3]{AbbanCheltsovKishimotoMangolte-1} that this blow up, the quadric $Q$, the line $\ell$ and the conic $C$ are all defined over $\Bbbk$, so, in particular, $Q( \Bbbk) \neq \emptyset$, which implies $X(\Bbbk) \neq\emptyset$ by Lang-Nishimura theorem \cite[Theorem 3.6.11]{Poonen}.

Now, we assume that $X$ is contained in the family \textnumero 2.15. 
Then it follows from \cite[Lemma 2.5]{AbbanCheltsovKishimotoMangolte-1} that there exists a birational morphism $\pi\colon X\to U$ such that
$U$ is a  $\Bbbk$-form of $\mathbb{P}^3$, and $\pi$ is a blow up of a smooth curve $C\subset U$ such that $C_\mathbb{C}$ is a complete intersection in $U_{\mathbb{C}}\simeq\mathbb{P}^3$ of a quadric surface $S_2$ and a cubic surface $S_3$.
Since $S_2$ is the unique quadric surface that contains $C_{\mathbb{C}}$, we see that $S_2$ is defined over $\Bbbk$.
Let $\mathcal{M}$ be the linear subsystem in $|S_2|$ that consists of all surfaces containing $C$.
Then $\mathcal{M}$ gives a birational map $U\dasharrow Y$ such that $Y_\mathbb{C}$ is a cubic 3-fold in $\mathbb{P}^4$ that has one isolated double point.
Now, applying \cite[Corollary 2.3]{AbbanCheltsovKishimotoMangolte-1}, we see that $Y$ is a cubic 3-fold in $\mathbb{P}^4$,
so projecting from its singular point, we obtain a birational map $Y\dasharrow\mathbb{P}^3$, which implies that $X(\Bbbk)\ne\varnothing$ by Lang-Nishimura theorem. 

Finally, we suppose that $X$ belongs to family \textnumero 5.1. 
Then, using  \cite[\S~III.3]{Matsuki} and \cite[Corollary~2.3]{AbbanCheltsovKishimotoMangolte-1},
we see that there is a birational morphism $f\colon X\to Q$ 
such that $Q$ is a smooth quadric in $\mathbb{P}^4$,  
and $f$ is a composition of a blow up of a reduced zero-dimensional subscheme $\Sigma\subset Q$ of length $3$ followed by the blow up of a strict transform of a conic in $Q$ that contains $\Sigma$. Then $Q(\Bbbk)\ne\varnothing$ by Springer theorem \cite{Springer}, so Lang-Nishimura theorem gives  $X(\Bbbk)\ne\varnothing$. 
\end{proof}

Over $\mathbb{C}$, the deformation families
\begin{center}
\textnumero 2.34, \textnumero 3.27, \textnumero 3.28, \textnumero 4.10, \textnumero 5.3, \textnumero 6.1, \textnumero 7.1, \textnumero 8.1, \textnumero 9.1, \textnumero 10.1
\end{center}
consist of products $\mathbb{P}^1\times S$, where $S$ is a smooth del Pezzo surface.
Thus, each of these families contains a real pointless 3-fold $C_2\times S$ such that $C_2$ is a pointless real conic in $\mathbb{P}^2_{\mathbb{R}}$,
and $S$ is a real smooth del Pezzo surface of an appropriate anticanonical degree.
Moreover, it has been shown in \cite{AbbanCheltsovKishimotoMangolte-1} that families
\begin{center}
\textnumero 1.10, \textnumero 2.10, \textnumero 2.12, \textnumero 2.13, \textnumero 2.16, \textnumero 2.19, \textnumero 2.21, \textnumero 2.23, \textnumero 2.33,\\
\textnumero 3.2, \textnumero 3.5, \textnumero 3.6, \textnumero 3.10, \textnumero 3.12, \textnumero 3.13, \textnumero 3.31, \textnumero 4.8, \textnumero 4.12, \textnumero 4.13,  \textnumero 5.2
\end{center}
contain real smooth pointless Fano 3-folds, and families \textnumero 1.9,  \textnumero 2.5,  \textnumero 2.24, \textnumero 3.7
contain smooth Fano 3-fold defined over $\mathbb{Q}$ that do not have rational points.
Furthermore, 
Examples~\ref{example:A-1-1-1-12}, \ref{example:A-1-2}, \ref{example:A-1-3}, \ref{example:A-1-4}, \ref{example:A-2-3}, \ref{example:A-2-7}, \ref{example:A-2-8}, \ref{example:A-3-3}, \ref{example:A-3-9} contains explicit examples of real smooth pointless Fano 3-folds in the families 
\begin{center}
\textnumero 1.1, \textnumero 1.2, \textnumero 1.3,  \textnumero 1.4, \textnumero 1.12, \textnumero 1.16, \textnumero 2.3, \textnumero 2.7, \textnumero 2.8, \textnumero 2.29, \textnumero 3.3, \textnumero 3.9, \textnumero 3.19, \textnumero 3.20, \textnumero 4.4.
\end{center}
Similarly, Example~\ref{example:A-1-14} gives examples of real smooth pointless Fano 3-folds in the family \textnumero 1.14
if we choose $a_1,a_2,a_3,a_4,a_5,a_6$ in this example to be real,
and Example~\ref{example:A-4-2} gives many  examples of real smooth pointless Fano 3-folds in the family \textnumero 4.2 if we choose $a_1,a_2,a_3,a_4$ there to be real.
Hence, to prove Proposition\,B, it is enough to present an example of a smooth Fano 3-fold $X$ in each of the families 
\begin{center}
\textnumero 1.5, \textnumero 1.6, \textnumero 1.7, \textnumero 1.8, \textnumero 1.13, \textnumero 1.17, \textnumero 2.2, \textnumero 2.4, \textnumero 2.6, \textnumero 2.18, \textnumero 2.25, \\
\textnumero 2.27, \textnumero 2.32,  \textnumero 3.1, \textnumero 3.4,  \textnumero 3.9, \textnumero 3.17,  \textnumero 3.25, \textnumero 4.1,  \textnumero 4.3, \textnumero 4.6, \textnumero 4.7
\end{center}
such that $X$ is defined over an appropriate subfield $\Bbbk\subset\mathbb{C}$ and $X(\Bbbk)=\varnothing$.
We will do this in the remaining part of the section indicating whether $\Bbbk=\mathbb{R}$ or $\Bbbk=\mathbb{Q}$ or $\Bbbk$ is some other field.
 
\begin{example}
\label{example:P-1-5}
Let $V$ be the real Grassmannian $\mathrm{Gr}(2,5)$ embedded into $\mathbb{P}^{9}$ by the Pl\"ucker embedding,
let $H_1$ and $H_2$ be general hyperplanes in $\mathbb{P}^9$, let $Q$ be a general pointless quadric in $\mathbb{P}^9$,
and let $X$ be the intersection of $V$, $H_1$, $H_2$, $Q$. Then $X$ is a real pointless smooth Fano 3-fold in the family \textnumero 1.5.
\end{example}

\begin{example}
\label{example:P-1-6-1-7-1-8}
By Corollary\,A, each family among \textnumero 1.6, \textnumero 1.7, \textnumero 1.8 
contains a smooth 3-fold $X$ defined over some subfield $\Bbbk\subset\mathbb{C}$ such that $X$ is not $\Bbbk$-unirational,
so $X(\Bbbk)=\varnothing$ by \cite[Theorem~1.1]{KuznetsovProkhorov2023}.
\end{example}

\begin{example}
\label{example:P-1-13}
To construct smooth pointless member of the family \textnumero 1.13, let
$$
X=\big\{x_1^3+2x_2^3+4x_3^3+x_1x_2x_3+7(x_4^3+2x_5^3)=0\big\}\subset\mathbb{P}^4_{x_1,x_2,x_3,x_4,x_5}.
$$
Then $X$ is a smooth cubic 3-fold defined over $\mathbb{Q}$, and it follows from \cite{DaiXu} that $X(\mathbb{Q})=\varnothing$.
\end{example}

\begin{example}
\label{example:P-1-17}
Let $U$ be the unique real form of $\mathbb{P}^3$ that has no real points.
Then $U$ is a smooth Fano 3-fold in the family \textnumero 1.17,
and $U$ contains a smooth surface $S$ such that $-K_U\sim 2S$ and $\mathrm{Pic}(U)=\mathbb{Z}[S]$.
Moreover, it follows from \cite[Example~6.8]{AbbanCheltsovKishimotoMangolte-1} that $S\simeq \mathbb{P}^1\times C$,
where $C$ is the real pointless conic.
This shows that $S$ contains three twisted lines $L$, $L^\prime$, $L^{\prime\prime}$, 
that is, $L_{\mathbb{C}}$, $L_{\mathbb{C}}^\prime$, $L_{\mathbb{C}}^{\prime\prime}$ are disjoint lines in $U_{\mathbb{C}}\simeq\mathbb{P}^3$.
If we blow up $U$ along $L$ and $L^\prime$, we obtain smooth real pointless Fano 3-fold in the family \textnumero 3.25.
Similarly, if we blow up $U$ along $L$, $L^\prime$, $L^{\prime\prime}$, we obtain smooth real pointless Fano 3-fold in the family \textnumero 4.6.
Moreover, if $S^\prime$ is a general surface in $|S|$ different from $S$, then $S\cap S^\prime$ is a smooth elliptic curve,
so blowing up $U$ along this curve, we obtain smooth real pointless Fano 3-fold in the family \textnumero 2.25. 
Finally, let $Z$ be a general curve in $S$ that is contained in the linear system $|-K_S-L|$.
Then $Z_{\mathbb{C}}$ is a smooth twisted rational cubic in  $U_{\mathbb{C}}\simeq\mathbb{P}^3$,
so blowing up $U$ along $Z$,  we get pointless real Fano 3-fold in the family \textnumero 2.27. 
\end{example}

\begin{example}
\label{example:P-2-2-2-18}
Let us use assumptions and notations of Example~\ref{example:A-2-2-2-18}.
Then $S$ is real and $S(\mathbb{R})=\varnothing$.
Thus, we can choose $X$ to be real and pointless.
Recall that $X$ is contained in the family \textnumero 2.18 if $d=2$, 
and $X$ is contained in the family \textnumero 2.2 if $d=4$.
\end{example}

\begin{example}
\label{example:P-2-4}
Let $C$ be the pointless real conic, let $U$ be the pointless real form of $\mathbb{P}^3$, and let $V=C\times U$.
Then, in the notations of \cite[\S~4]{KollarSB}, the twisted line bundle $\mathcal{O}_V(1,1)$ is a line bundle on $V$ by \cite[\S~4]{KollarSB}.
Thus, since $\mathcal{O}_U(2)$ is a line bundle on $U$, we see that $\mathcal{O}_V(1,3)$ is a line bundle on $V$.
Let $X$ be a general 3-fold in $|\mathcal{O}_V(1,3)|$. Then $X$ is smooth, and 
$X_\mathbb{C}\sim \mathrm{pr}_1^*(\mathcal{O}_{\mathbb{P}^1}(1))+\mathrm{pr}_2^*(\mathcal{O}_{\mathbb{P}^3}(3))$  on $V_\mathbb{C}\simeq\mathbb{P}^1\times\mathbb{P}^3$, where $\mathrm{pr}_1\colon \mathbb{P}^1\times\mathbb{P}^3\to \mathbb{P}^1$ and $\mathrm{pr}_2\colon \mathbb{P}^1\times\mathbb{P}^3\to \mathbb{P}^3$ are projections to the first and the second factor, respectively.
Hence, since $V(\mathbb{R})=\varnothing$, $X$ is a pointless real Fano 3-fold in the family \textnumero 2.4.
\end{example}

\begin{example}
\label{example:P-2-6}
Let $X$ be the divisor of degree $(2,2)$ in $\mathbb{P}^2_{u,v,w}\times\mathbb{P}^2_{x,y,z}$ that is given by 
$$
(u^2+v^2+1967w^2)x^2+(u^2+1973v^2+w^2)y^2+(1983u^2+v^2+w^2)z^2=0.
$$
Then $X$ is a real smooth pointless Fano 3-fold in the family \textnumero 2.6.
\end{example}

\begin{example}
\label{example:P-2-32}
Let us use the notation and assumptions of Example~\ref{example:A-2-7} with    
$$
Z=\{x_1=x_2+ix_3=x_4+ix_5=0\}\cup\{x_1=x_2-ix_3=x_4-ix_5=0\}. 
$$ 
Let $S=Q\cap \{x_1=0\}$, and let $\widetilde{S}$ be the strict transform of $S$ on the 3-fold $X$.
Then there is a birational morphism $X\to W$ such that $W$ is a smooth Fano 3-fold in the family \textnumero 2.32.
By \cite[Theorem 3.6.11]{Poonen}, we have $W(\mathbb{R})=\varnothing$, since $X(\mathbb{R})=\varnothing$.
Now, we let 
$$
C=\{x_3=x_2+ix_1=x_4+ix_5=0\}\cup\{x_3=x_2-ix_1=x_4-ix_5=0\}.
$$ 
Then $C\subset Q$, and the curve $C$ is defined over $\mathbb{R}$.
Let $\widetilde{C}$ be the strict transform on $X$ of the curve $C$. Then $\widetilde{C}\not\subset\widetilde{S}$, and the image of $\widetilde{C}$ in $W$ is a smooth curve. 
Moreover, if we blowup $W$ along this curve, we obtain a real pointless smooth Fano 3-fold in the family \textnumero 4.7.
\end{example}

\begin{example}
\label{example:P-3-1}
Let $Q$ be a pointless real quadric in $\mathbb{P}^3$, let $V=Q\times \mathbb{P}^1$, let $S$ be a general surface in the~linear system $|-K_V|$,
and let $\pi\colon X\to V$ be a double cover branched over $S$. Then $X$ is a smooth real pointless Fano 3-fold in the family \textnumero 3.1. 
\end{example}

\begin{example}
\label{example:P-3-4}
Let us use the notation and assumptions of Example~\ref{example:P-2-2-2-18} with $d=2$.
Let $C$ be the preimage via $\pi$ of the curve $\{y=0,z=0\}\subset \mathbb{P}^1_{u,v}\times\mathbb{P}^2_{x,y,z}$.
Then $C$ is smooth and defined over $\mathbb{R}$. Let $Y$ be the blow up of the 3-fold $X$ along the curve $C$.
Then $Y$ is a smooth real Fano 3-fold in the family \textnumero 3.4, which is pointless by \cite[Theorem 3.6.11]{Poonen}.
\end{example}

\begin{example}
\label{example:P-3-9}
Let us use the notation and assumptions of Example~\ref{example:A-3-9}. 
Then $S$ and $E+E^\prime$ are defined over $\mathbb{R}$, and the divisor $S+E+E^\prime$ does not contain real points.
Thus, we can choose $\overline{X}$ to be real and pointless. 
Then, by construction, $X$ is a smooth real pointless Fano 3-fold in the family \textnumero 3.9.
\end{example}

\begin{example}
\label{example:P-3-17}
Let $V=Q\times\mathbb{P}^2$, where $Q$ is a pointless real quadric in $\mathbb{P}^3$,
let $X$ be a general divisor in $|\mathrm{pr}_1^*(H)+\mathrm{pr}_2^*(\mathcal{O}_{\mathbb{P}^2}(1))|$,
where $H$ is a hyperplane section of $Q$, $\mathrm{pr}_1\colon V\to Q$ and $\mathrm{pr}_2\colon V\to \mathbb{P}^2$ are projections to the first and the second factors, respectively.  Then $X$ is a smooth real pointless Fano 3-fold in the family \textnumero 3.17.
\end{example}

\begin{example}
\label{example:P-4-1}
Let $X$ be the 3-fold in $\mathbb{P}^3_{x_1,x_2,x_3,x_4}\times\mathbb{P}^{3}_{y_1,y_2,y_3,y_4}$ given by
$$
\left\{\aligned
&x_1^2+x_2^2+x_3^2+x_4^2=0, \\
&y_1^2+y_2^2+y_3^2+y_4^2=0,\\
&x_1y_1+1967x_2y_2+1973x_3y_3+1983x_4y_4=0.
\endaligned
\right.
$$
Then $X$ is a smooth real pointless Fano 3-fold in the deformation family \textnumero 4.1.
\end{example}

\begin{example}
\label{example:P-4-3}
Let $V=\{x_1^2+x_2^2+x_3^2=y_1^2+y_2^2+y_3^2=0\}\subset\mathbb{P}^2_{x_1,x_2,x_3}\times\mathbb{P}^{2}_{y_1,y_2,y_3}\times\mathbb{P}^{1}_{z_1,z_2}$,
and let
$$
C=V\cap\big\{x_1y_2=x_2y_1, x_1y_3=x_3y_1, x_2y_3=x_3y_2,x_1z_2=x_2z_1\big\}.
$$
Then $V$ and $C$ are smooth. Let $X$ be the blowup of $V$ along the curve $C$. 
Then $X$ is a pointless smooth real Fano 3-fold in the family \textnumero 4.3.
\end{example}

\appendix

\section{Unirationality of smooth Fano 3-folds of degree $14$}
\label{section:V14}


Let $X$ be a smooth Fano 3-fold contained in the family \textnumero 1.7 which is defined over a subfield $\Bbbk\subset\mathbb{C}$.
Then $\mathrm{Pic}(X_{\mathbb{C}})=\mathbb{Z}[-K_{X_{\mathbb{C}}}]$, $-K_X^3=14$, the linear system $|-K_X|$ gives an embedding $X\hookrightarrow\mathbb{P}^{9}$ 
such that the image of $X$ is a scheme-theoretic intersection of quadrics.
In the following, we will identify $X$ with its anticanonical image in $\mathbb{P}^9$.
The goal of this appendix is to present a short proof of the following result, which essentially follows from the ideas and results in \cite{Kollar,KuznetsovProkhorov2023,Takeuchi}.

\begin{theorem}
\label{theorem:V14}
The following three conditions are equivalent:
\begin{itemize}
\item[$(\mathrm{1})$] $X(\Bbbk)\ne\varnothing$,
\item[$(\mathrm{2})$] $X(\Bbbk)\ne\varnothing$ and $X$ is birational to a smooth cubic 3-fold in $\mathbb{P}^4$,
\item[$(\mathrm{3})$] $X$ is unirational over $\Bbbk$.
\end{itemize}
\end{theorem}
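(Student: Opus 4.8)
The plan is to prove the cyclic chain of implications $(3)\Rightarrow(1)\Rightarrow(2)\Rightarrow(3)$, since $(2)\Rightarrow(1)$ is trivial and $(3)\Rightarrow(1)$ is immediate: a $\Bbbk$-unirational variety of positive dimension carries a dominant rational map from an open subset of $\mathbb{P}^n_\Bbbk$, and since $\Bbbk$ is infinite (being a subfield of $\mathbb{C}$) the source has $\Bbbk$-points in the domain of definition, whose images give points in $X(\Bbbk)$. Thus the entire content lies in the implication $(1)\Rightarrow(2)$, which upgrades the mere existence of a $\Bbbk$-point to a concrete birational model, and in $(2)\Rightarrow(3)$, which deduces $\Bbbk$-unirationality from that model.

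For $(1)\Rightarrow(2)$ the geometric input is the classical Fano--Iskovskikh birational geometry of the degree-$14$ prime Fano 3-fold recalled before the statement: $X\subset\mathbb{P}^9$ is an intersection of quadrics with $\mathrm{Pic}(X_\mathbb{C})=\mathbb{Z}[-K_{X_\mathbb{C}}]$. The key fact, due to Takeuchi and used in \cite{KuznetsovProkhorov2023}, is that $X_\mathbb{C}$ is birational to a smooth cubic 3-fold $Y_\mathbb{C}\subset\mathbb{P}^4$ via a link initiated by blowing up a conic (or, in the dual description, projecting from a suitable linear center). The task over $\Bbbk$ is to realize this link $\Bbbk$-rationally starting from a $\Bbbk$-point. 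First I would take $p\in X(\Bbbk)$ and use it to produce a $\Bbbk$-rational geometric construction — e.g.\ a conic through $p$, or the projection/line geometry through $p$ — that is Galois-stable and hence defined over $\Bbbk$; the numerical rigidity from $\mathrm{Pic}(X_\mathbb{C})=\mathbb{Z}[-K_{X_\mathbb{C}}]$ forces the relevant divisor classes and contractions to descend to $\Bbbk$. Running Takeuchi's link equivariantly with respect to $\mathrm{Gal}(\overline{\Bbbk}/\Bbbk)$ then yields a cubic threefold $Y$ over $\Bbbk$ with $Y_\mathbb{C}$ smooth, birational to $X$ over $\Bbbk$, which is exactly condition $(2)$.

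For $(2)\Rightarrow(3)$ the point is that a smooth cubic 3-fold $Y\subset\mathbb{P}^4$ with a $\Bbbk$-point is $\Bbbk$-unirational: this is a theorem of Koll\'ar \cite{Kollar}, which even produces a degree-$2$ unirational parametrization provided $Y(\Bbbk)\ne\varnothing$ (and one checks the $\Bbbk$-point required is supplied either directly by $(1)$ transported along the birational map, or by the abundance of $\Bbbk$-points on a cubic with one $\Bbbk$-point). Since $X$ and $Y$ are $\Bbbk$-birational, $\Bbbk$-unirationality of $Y$ transfers to $X$, giving $(3)$.

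The main obstacle I anticipate is $(1)\Rightarrow(2)$, specifically the descent of Takeuchi's birational link to $\Bbbk$: one must verify that the center blown up (a conic, together with the attendant curves appearing in the Sarkisov link) can be chosen Galois-invariant and that no cohomological obstruction prevents the intermediate varieties and the terminal cubic from being defined over $\Bbbk$ rather than merely over $\overline{\Bbbk}$. Here the hypothesis $\mathrm{Pic}(X_\mathbb{C})=\mathbb{Z}[-K_{X_\mathbb{C}}]$ is essential, as it rigidifies the divisor theory so that the Mori-theoretic steps are forced and hence automatically Galois-equivariant; the remaining care is in choosing the geometric center through the given $\Bbbk$-point and confirming the resulting cubic stays smooth, for which I would invoke the genericity built into Takeuchi's construction together with the existence results of \cite{KuznetsovProkhorov2023}.
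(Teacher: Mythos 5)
Your outline agrees with the paper on the easy arcs --- $(2)\Rightarrow(3)$ via Lang--Nishimura plus Koll\'ar \cite{Kollar}, and $(3)\Rightarrow(1)$ from density of rational points --- but the load-bearing step, your direct proof of $(1)\Rightarrow(2)$, has a genuine gap, and it is exactly the gap the paper's appendix is built to close. The double projection (Takeuchi) link from a point $P\in X$ to a smooth cubic 3-fold exists only when $X_{\mathbb{C}}$ contains \emph{no line through $P$}: this is the explicit hypothesis of Lemma~\ref{lemma:V14-point}, and it is not a removable technicality. If a line $\ell\ni P$ exists, then on the blowup $\pi\colon\widetilde{X}\to X$ one has $-K_{\widetilde{X}}\cdot\widetilde{\ell}=(-K_X)\cdot\ell-2E\cdot\widetilde{\ell}=1-2=-1$ for the strict transform $\widetilde{\ell}$, so $-K_{\widetilde{X}}$ is not nef and the Sarkisov link you want to run does not have the stated structure. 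Galois-equivariance and $\mathrm{Pic}(X_{\mathbb{C}})=\mathbb{Z}[-K_{X_{\mathbb{C}}}]$ are irrelevant here: the problem is not descending the link to $\Bbbk$ but its existence over $\mathbb{C}$. Since the lines of $X_{\mathbb{C}}$ sweep out a surface, a single given $\Bbbk$-point may well lie on it, and to produce a $\Bbbk$-point off that surface you would need $X(\Bbbk)$ to be Zariski dense --- i.e.\ $\Bbbk$-unirationality, which is what is being proved. Your proposed $(1)\Rightarrow(2)$ is therefore circular precisely in the hard case; the same objection applies to the fallback of choosing ``a conic through $p$'' defined over $\Bbbk$, since no such Galois-invariant choice is guaranteed.

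The paper instead proves $(1)\Rightarrow(3)$ directly and only then deduces $(2)$, via $(3)\Rightarrow(2)$: unirationality makes $X(\Bbbk)$ dense, so one may pick a point avoiding all lines and apply Lemma~\ref{lemma:V14-point}. The proof of $(1)\Rightarrow(3)$ is a three-way case analysis on the given point $P$. If no line of $X_{\mathbb{C}}$ passes through $P$, Lemma~\ref{lemma:V14-point} and Koll\'ar's theorem give unirationality. If some line through $P$ is defined over $\Bbbk$, Lemma~\ref{lemma:V14-line} blows it up and uses the resulting conic bundle, with the ($\Bbbk$-rational) exceptional divisor as a multisection, invoking \cite[Lemma~4.14]{KuznetsovProkhorov2023}. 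In the remaining case --- lines through $P$ exist but none is defined over $\Bbbk$ --- the paper blows up $P$ and the strict transform of the cone $C=\mathbf{T}_P(X)\cap X$, which is a $\Bbbk$-rational curve whose geometric components are the two or three lines through $P$; the anticanonical model of the result is a normal $(2,3)$ complete intersection $\overline{X}\subset\mathbb{P}^5$ with terminal Gorenstein singularities, and projecting from the image $\overline{E}$ of the exceptional surface (a plane or a quadric surface, according to the number of lines) yields a conic bundle over $\mathbb{P}^2$ or a degree-$4$ del Pezzo fibration over $\mathbb{P}^1$, equipped with a $\Bbbk$-rational surface dominating the base; unirationality then follows again from \cite[Lemma~4.14]{KuznetsovProkhorov2023} (Lemmas~\ref{lemma:V14-4} and~\ref{lemma:V14-3}). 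This entire mechanism --- the Hilbert-scheme analysis of $F_1(X,P)$, the cone construction, and the two fibration arguments --- is absent from your proposal, and without it the chain $(1)\Rightarrow(2)\Rightarrow(3)$ cannot be completed.
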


To prove this theorem, we need two results, which are known to experts.

\begin{lemma}
\label{lemma:V14-line}
Suppose that $X$ contains a line $\ell$ defined over $\Bbbk$. Then $X$ is unirational over $\Bbbk$.
\end{lemma}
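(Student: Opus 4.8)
The plan is to reduce the statement to the unirationality of a cubic threefold. Working throughout over $\Bbbk$, I would exploit the classical birational link, due to Fano and made precise by Takeuchi \cite{Takeuchi} (see also \cite{IsPr99}), between a member of family \textnumero 1.7 and a cubic threefold, realized through the \emph{double projection} from the line $\ell$. Since $X$, its anticanonical embedding $X\hookrightarrow\mathbb{P}^9$, and the line $\ell$ are all defined over $\Bbbk$, every geometric operation below is Galois-equivariant and hence descends to $\Bbbk$.

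Concretely, first I would blow up $\ell$, say $\sigma\colon\widetilde{X}\to X$ with exceptional divisor $E$, and study the linear system $|\sigma^*(-K_X)-2E|$ governing the double projection from $\ell$. The key input is Takeuchi's analysis \cite{Takeuchi}: after flopping the finitely many lines on $X$ that meet $\ell$ and then performing the residual contraction, this linear system induces a birational map $\rho\colon X\dashrightarrow Y$ onto a cubic threefold $Y\subset\mathbb{P}^4$. The flopped curves form a $\Bbbk$-rational (Galois-stable) finite union, so the whole transformation $\rho$, together with the embedding $Y\subset\mathbb{P}^4$, is defined over $\Bbbk$; this is the step where the normal-bundle geometry of $\ell$ enters and must be controlled.

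Next I would produce a $\Bbbk$-point on $Y$: since $\ell\cong\mathbb{P}^1_{\Bbbk}$ we have $X(\Bbbk)\neq\varnothing$, so Lang--Nishimura \cite[Theorem 3.6.11]{Poonen} applied to $\rho$ gives $Y(\Bbbk)\neq\varnothing$. A cubic threefold over $\Bbbk$ possessing a smooth $\Bbbk$-point is $\Bbbk$-unirational by \cite{Kollar} (the classical tangent-line/secant construction), hence $Y$ is $\Bbbk$-unirational. Because $\rho$ is a $\Bbbk$-birational map, $\Bbbk$-unirationality transfers back from $Y$ to $X$: composing a dominant $\Bbbk$-rational map $\mathbb{P}^3\dashrightarrow Y$ with $\rho^{-1}$ exhibits $X$ as $\Bbbk$-unirational.

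The main obstacle is the second step: verifying that Takeuchi's double projection is available and well-behaved for the \emph{given} line $\ell$, which may be special, and confirming that the birational target is genuinely a cubic hypersurface in $\mathbb{P}^4$ carrying a smooth $\Bbbk$-point. The precise behavior of the double projection depends on the normal bundle $N_{\ell/X}$, so lines with $N_{\ell/X}\cong\mathcal{O}(1)\oplus\mathcal{O}(-2)$ may need a treatment separate from the generic case $\mathcal{O}\oplus\mathcal{O}(-1)$; one should also ensure that the $\Bbbk$-point produced on $Y$ avoids the loci where the unirationality construction of \cite{Kollar} degenerates, which can be arranged by moving the point within $Y(\Bbbk)$.
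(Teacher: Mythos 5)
There is a genuine gap, and it is the central geometric claim. You assert, citing Takeuchi and \cite{IsPr99}, that the double projection from the line $\ell$ (the system $|\sigma^*(-K_X)-2E|$, after flops) maps $X$ birationally onto a cubic threefold $Y\subset\mathbb{P}^4$. For a genus-$8$ Fano threefold this is false: by \cite[Theorem~4.3.3]{IsPr99} (equivalently, Takeuchi's classification \cite{Takeuchi}), the Sarkisov link centered at a \emph{line} on $X_{14}$ terminates not in a divisorial contraction to a Fano threefold but in a \emph{conic bundle} over $\mathbb{P}^2$ (with quintic discriminant). A quick dimension count already shows the double projection from a line can only map to a $\mathbb{P}^2$: $h^0(-K_X)=10$, while vanishing to order two along $\ell$ imposes $2+h^0\bigl(N^\vee_{\ell/X}\otimes\mathcal{O}_\ell(1)\bigr)=2+5=7$ conditions. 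The cubic threefold does appear in the birational geometry of $X$, but via the link centered at a \emph{point} (blowup of $P$, anticanonical map to a $(2,3)$ complete intersection in $\mathbb{P}^5$, then $|-2K_{\widetilde{X}}-E|$); this is exactly what Lemma~\ref{lemma:V14-point} and the proof of Theorem~\ref{theorem:V14} use, following \cite{KuznetsovProkhorov2023}. So your steps built on the cubic target—Lang--Nishimura to get $Y(\Bbbk)\neq\varnothing$ and then Koll\'ar's unirationality of pointed cubics \cite{Kollar}—have nothing to apply to.

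Note also that the gap cannot be repaired merely by replacing ``cubic threefold'' with the correct target: a conic bundle over $\mathbb{P}^2$ possessing a $\Bbbk$-point is not known to be unirational in general, so producing a point on the other end of the link is not enough. The paper's actual argument supplies the missing ingredient: after the flop, the second contraction is a conic bundle $\eta\colon\widetilde{X}'\to S$ onto a form $S$ of $\mathbb{P}^2$; since $\ell\simeq\mathbb{P}^1_{\Bbbk}$ gives $X(\Bbbk)\neq\varnothing$, Lang--Nishimura forces $S(\Bbbk)\neq\varnothing$, hence $S\simeq\mathbb{P}^2$; and, crucially, the exceptional divisor $E$ of the blowup of $\ell$ is a $\Bbbk$-rational surface (a Hirzebruch surface over $\ell\simeq\mathbb{P}^1_{\Bbbk}$) whose strict transform $\chi_*(E)$ dominates $S$. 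Base-changing the conic bundle to this rational multisection produces a section, and \cite[Lemma~4.14]{KuznetsovProkhorov2023} then yields unirationality of $\widetilde{X}'$, hence of $X$. This multisection argument is the idea your proposal is missing.
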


\begin{proof}
Let $\pi\colon\widetilde{X}\to X$ be the blowup of the line $\ell$.
Then $|-K_{\widetilde{X}}|$ is base point free and, in particular, the divisor $-K_{\widetilde{X}}$ is big and nef.
Moreover, it follows from  \cite[Theorem~4.3.1]{IsPr99} that $|-K_{\widetilde{X}}|$ gives a small birational morphism $\phi\colon\widetilde{X}\to Y$ such that $Y$ is a singular Fano 3-fold with terminal Gorenstein singularities, and $(-K_Y)^3=10$. 
So, by \cite[Theorem~4.3.3]{IsPr99}, we have the following Sarkisov link:
$$
\xymatrix{
\widetilde{X}\ar@{->}[d]_{\pi}\ar@{->}[drr]_{\phi}\ar@{-->}[rrrr]^{\chi}&&&& \widetilde{X}'\ar@{->}[dll]^{\phi^\prime}\ar@{->}[d]^{\eta}\\%
X && Y&&S}
$$
where $\chi$ is a pseudoisomorphism that flops curves contracted by $\phi$, 
$\widetilde{X}'$ is a smooth weak Fano 3-fold, $\phi^\prime$ is a small birational morphism,
$S$ is a form of $\mathbb{P}^2$, and $\eta$ is a conic bundle.
Since $\widetilde{X}'(\Bbbk)\ne\varnothing$ by Lang–Nishimura theorem \cite[Theorem 3.6.11]{Poonen}, we have $S(\Bbbk)\ne\varnothing$, so $S\simeq\mathbb{P}^2$.

Let $E$ be the $\pi$-exceptional divisor. Then $E$ is rational over $\Bbbk$,
and it follows from \cite[Theorem~4.3.3]{IsPr99} that $\eta$ induces a dominant morphism $\chi_\ast(E)\to S$,
so $\widetilde{X}'$ is unirational over $\Bbbk$ by  \cite[Lemma~4.14]{KuznetsovProkhorov2023}.
Therefore, the smooth Fano 3-fold $X$ is also unirational over $\Bbbk$.
\end{proof}

\begin{lemma}
\label{lemma:V14-point}
Suppose that $X(\Bbbk)$ contains a point $P$ such that the 3-fold $X_{\mathbb{C}}$ does not contain lines passing through~$P$. 
Then $X$ is birational to a smooth cubic 3-fold in $\mathbb{P}^4$.
\end{lemma}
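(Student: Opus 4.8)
The plan is to realize the required birational map as the Sarkisov link obtained by projecting $X$ from the point $P$, following Takeuchi's analysis of the genus-$8$ Fano threefold \cite{Takeuchi} and arranging it to be defined over $\Bbbk$. First I would let $\sigma\colon\widetilde{X}\to X$ be the blowup of the $\Bbbk$-point $P$, with exceptional divisor $E\cong\mathbb{P}^2_{\Bbbk}$ (genuinely a projective plane over $\Bbbk$, since $E=\mathbb{P}(T_{X,P})$ for a $3$-dimensional $\Bbbk$-vector space). Then $-K_{\widetilde{X}}=\sigma^\ast(-K_X)-2E$, and the standard intersection computation gives $(-K_{\widetilde{X}})^3=(-K_X)^3-8=14-8=6$, so $-K_{\widetilde{X}}$ is big. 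The crucial role of the hypothesis is to force nefness: an irreducible curve $C$ with $-K_{\widetilde{X}}\cdot C<0$ must meet $E$ and be the strict transform of a curve $C_0\ni P$ with $(-K_X)\cdot C_0<2\,\mathrm{mult}_P(C_0)$, and one checks that among curves on $X_{\mathbb{C}}$ this inequality can hold only for a line through $P$. Thus the absence of lines through $P$ makes $-K_{\widetilde{X}}$ nef, so $\widetilde{X}$ is a weak Fano threefold defined over $\Bbbk$.

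Next I would read off the birational geometry of $\widetilde{X}$. Since $-K_{\widetilde{X}}$ is nef and big with $(-K_{\widetilde{X}})^3=6$, its anticanonical morphism contracts exactly the strict transforms of the finitely many conics through $P$ (the $-K_{\widetilde{X}}$-trivial curves) onto a genus-$4$ Fano threefold $\overline{X}=Q_2\cap F_3\subset\mathbb{P}^5$ with terminal Gorenstein singularities, and $E$ maps to a Veronese surface because $-K_{\widetilde{X}}|_E=\mathcal{O}_{\mathbb{P}^2}(2)$. Flopping these curves produces $\widetilde{X}'$ with the same anticanonical model, and by \cite{Takeuchi} (see also \cite[\S~4.3]{IsPr99}) the second extremal contraction of the weak Fano $\widetilde{X}'$ is the blowdown of the strict transform of $E$ onto a rational normal quartic curve $\Gamma$ in a smooth cubic threefold $Y\subset\mathbb{P}^4$. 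The degree and genus of $\Gamma$ are pinned down by the crepancy of the flop, via
$$
(-K_{\widetilde{X}'})^3=(-K_Y)^3-2(-K_Y\cdot\Gamma)-2+2p_a(\Gamma)=24-16-2+0=6 .
$$
Over $\mathbb{C}$ this gives the Sarkisov link
$$
\xymatrix@R=1em{
\widetilde{X}\ar@{->}[d]_{\sigma}\ar@{-->}[rr]&& \widetilde{X}'\ar@{->}[d]\\
X && Y
}
$$
exhibiting $X$ as birational to the smooth cubic $Y$.

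Finally I would descend the entire picture to $\Bbbk$. Because $P\in X(\Bbbk)$, the morphism $\sigma$ is defined over $\Bbbk$; the system $|-K_{\widetilde{X}}|$, the flop, and the terminal contraction are all canonically attached to $-K_{\widetilde{X}}$ and hence $\mathrm{Gal}(\overline{\Bbbk}/\Bbbk)$-equivariant, so they are defined over $\Bbbk$. Consequently $Y$ is a smooth Fano threefold over $\Bbbk$ with $\mathrm{Pic}(Y_{\mathbb{C}})=\mathbb{Z}[H_Y]$ and $H_Y^3=3$; since $H_Y$ is the (Galois-invariant) index-$2$ generator it descends to $\Bbbk$, and $|H_Y|$ embeds $Y$ into $\mathbb{P}^4_{\Bbbk}$ as a cubic hypersurface. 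The hard part will be exactly this descent: I must argue that under the no-line hypothesis the second contraction is the divisorial one onto the cubic rather than a Mori fibre space, that its uniqueness renders the whole link Galois-equivariant, and that the smoothness of $Y$ over $\mathbb{C}$ supplied by \cite{Takeuchi} descends to the $\Bbbk$-cubic. The remaining points — the nefness bookkeeping and the conic/quartic incidence computations — are routine.
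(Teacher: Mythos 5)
Your strategy is the same as the paper's: blow up $P$, show that $\widetilde{X}$ is a weak Fano 3-fold with $(-K_{\widetilde{X}})^3=6$, pass to the anticanonical model $\overline{X}$ (a complete intersection of a quadric and a cubic in $\mathbb{P}^5$ with terminal Gorenstein singularities), flop, and invoke Takeuchi's link to reach a smooth cubic 3-fold blown up along a quartic curve. But your description of the link contains a concrete error: the divisor contracted by $\eta$ is \emph{not} the strict transform of $E$. Write $H=-K_X$, so that $-K_{\widetilde{X}}=\pi^*H-2E$; the map to the cubic is given by $|-2K_{\widetilde{X}}-E|=|2\pi^*H-5E|$ (this is the content of \cite[Theorem~4.5.8]{IsPr99}, not of \S~4.3, which is the line case used in Lemma~\ref{lemma:V14-line}). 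Identifying $\mathrm{Pic}(\widetilde{X})$ with $\mathrm{Pic}(\widetilde{X}')$ via the flop, one has $\eta^*H_V=2\pi^*H-5E$ and $-K_{\widetilde{X}'}=2\eta^*H_V-D$, where $H_V$ is the hyperplane class of the cubic $V$ and $D$ is the $\eta$-exceptional divisor; solving gives $D\sim 3\pi^*H-8E$ and $E\sim 3\eta^*H_V-2D$. Hence the strict transform of $E$ maps onto a surface in $|\mathcal{O}_V(3)|$ containing $\Gamma$ --- it is not contracted --- while the divisor contracted onto $\Gamma$ is a different one, of class $3\pi^*H-8E$. (You note yourself that $\varphi$ embeds $E$ as a Veronese surface; it is the other extremal divisor that dies.) This slip does not by itself invalidate the conclusion, since only the existence of the link is needed, but it shows the link has not been pinned down correctly.

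More seriously, the steps you assert or defer are precisely the ones that carry the weight of the lemma, and they are what the paper actually proves. First, smallness of $\varphi$: you assert that the $-K_{\widetilde{X}}$-trivial locus consists of finitely many conics, but a priori $\varphi$ could contract a divisor, in which case $\overline{X}$ would have non-terminal singularities and no flop or link of the stated type would exist; the paper excludes this by showing that a $\varphi$-contracted irreducible surface $S$ would satisfy $S\sim t(-2K_{\widetilde{X}}-3E)$ with $t>0$ and hence be a fixed component of $|-2K_{\widetilde{X}}-E|$, contradicting $\dim|-2K_{\widetilde{X}}-E|\geqslant 5$ by \cite[Lemma~5.4]{KuznetsovProkhorov2023} (alternatively, \cite[Theorem~4.9]{JahnkePeternellRadloff2005}). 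Second, birationality of $\varphi$: with $(-K_{\widetilde{X}})^3=6$ the anticanonical map could a priori be generically two-to-one onto a cubic scroll; the paper rules this out using \cite{CPS} and the proofs of \cite[Lemmas~5.11 and 5.12]{KuznetsovProkhorov2023}, a possibility your proposal never addresses. Third, the identification of the second contraction (divisorial, onto a quartic curve in a \emph{smooth} cubic, rather than some other link type) and the descent of the whole picture to $\Bbbk$: you explicitly postpone these as ``the hard part,'' but they are the crux. The paper gets both at once because the lemmas it quotes from \cite{KuznetsovProkhorov2023} are stated over non-closed fields, so the blowup, the anticanonical model, the flop and the second contraction are constructed over $\Bbbk$ from the start, and then \cite{Takeuchi} or \cite[Theorem~4.5.8]{IsPr99} identifies the target. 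As written, your proposal retraces the paper's route but leaves its essential verifications unproved or misstated.
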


\begin{proof}
Let $\pi\colon \widetilde{X}\to X$ be the blowup of the point $P$.
Then $-K_{\widetilde{X}}^3=6$, and it follows from \cite[Lemma~5.7]{KuznetsovProkhorov2023} that $|-K_{\widetilde{X}}|$ is a base point free linear system of dimension $5$. Thus, it follows from \cite{CPS} and the proof of \cite[Lemma~5.11]{KuznetsovProkhorov2023} that the linear system $|-K_{\widetilde{X}}|$ gives a morphism $\varphi\colon\widetilde{X}\to\overline{X}$ such that either $\varphi$ is birational, 
or $\varphi$ is generically two-to-one, and $\overline{X}$ is either a Segre cubic scroll or a cone over a smooth two-dimensional cubic scroll.
Moreover, arguing as in the proof of \cite[Lemma~5.12]{KuznetsovProkhorov2023}, we see that $\varphi$ is birational,
and $\overline{X}$ is a normal complete intersection of a quadric and a cubic in $\mathbb{P}^5$.

Let $E$ be the $\pi$-exceptional surface.
Then the dimension of $|-2K_{\widetilde{X}}-E|$ is at least $5$ by  \cite[Lemma~5.4]{KuznetsovProkhorov2023}.
On the other hand, if $\varphi$ contracts an irreducible surface $S$, then it follows from \cite[Lemma~5.7]{KuznetsovProkhorov2023} that 
$$
S\sim t(-2K_{\widetilde{X}}-3E)
$$
for some $t\in\mathbb{Z}_{>0}$, and $S$ is a fixed component of the linear system  $|-2K_{\widetilde{X}}-E|$,
which is a contradiction. Thus, we see that $\varphi$ is small, which also follows from \cite[Theorem~4.9]{JahnkePeternellRadloff2005}.
Therefore, $\overline{X}$ has terminal Gorenstein singularities. 
Then the blowup $\pi$ gives rise to a Sarkisov link studied in \cite{Takeuchi}.
To be more precise, it follows from \cite{Takeuchi} or \cite[Theorem~4.5.8]{IsPr99} that 
the linear system $|-2K_{\widetilde{X}}-E|$ gives a birational map $\rho\colon\widetilde{X}\dasharrow V$ such that $V$ is a smooth cubic 3-fold in $\mathbb{P}^4$,
and $\rho$ fits the following commutative diagram:
$$
\xymatrix{
\widetilde{X}\ar@{->}[d]_{\pi}\ar@{-->}[drr]_{\rho}\ar@{-->}[rr]^{\chi}&& \widetilde{X}'\ar@{->}[d]^{\eta}\\%
X && V}
$$
where $\chi$ is a pseudoisomorphism that flops curves contracted by $\varphi$, $\widetilde{X}'$ is a smooth weak Fano 3-fold, 
and $\eta$ is a blowup of a form of a twisted rational quartic curve in $V$. 
\end{proof}

Now, we are ready to prove Theorem~\ref{theorem:V14}.
If $X$ is unirational over $\Bbbk$, then $X(\Bbbk)$ is Zariski dense in $X$,
which implies that $X(\Bbbk)$ contains a point $P$ such that $X_{\mathbb{C}}$ does not contain lines that pass through $P$,
so $X$ is birational to a smooth cubic 3-fold in $\mathbb{P}^4$ by Lemma~\ref{lemma:V14-point}.
This proves $(\mathrm{3})\Rightarrow (\mathrm{2})$.

If $X(\Bbbk)\ne\varnothing$ and $X$ is birational to a smooth cubic 3-fold $Y\subset\mathbb{P}^4$, then $Y(\Bbbk)\ne\varnothing$ by Lang–Nishimura theorem \cite[Theorem~3.6.11]{Poonen}, which implies that $Y$ is  unirational over~$\Bbbk$ by \cite{Kollar}, so $X$ is unirational over~$\Bbbk$ as well.
This proves $(\mathrm{2})\Rightarrow (\mathrm{3})$. 

The implication $(\mathrm{2})\Rightarrow (\mathrm{1})$ is obvious, hence to complete the proof, we show that $(\mathrm{1})\Rightarrow (\mathrm{3})$. 
To do this, we suppose that $X$ contains a $\Bbbk$-point $P$. We must prove that $X$ is unirational over $\Bbbk$. 
Using Lemmas~\ref{lemma:V14-line} and \ref{lemma:V14-point}, we may assume that $P$ is contained in a line in~$X_{\mathbb{C}}$, 
but none of the lines in $X_{\mathbb{C}}$ that passes through $P$ is defined over $\Bbbk$. 
In particular, the 3-fold $X_{\mathbb{C}}$ contains at least two lines that pass through $P$.

As in \cite[\S~2.4]{KuznetsovProkhorov2023}, we let $F_1(X)$ be the Hilbert scheme of lines in~$X$,
and we let $F_1(X,P)$ be the subscheme in $F_1(X)$ parameterizing lines passing through $P$. Then 
$$
F_1(X_{\mathbb{C}},P)\simeq F_1(X,P)_{\mathbb{C}}.
$$
Moreover, by our assumption, we have $F_1(X,P)(\Bbbk)=\varnothing$ and $F_1(X,P)(\mathbb{C})\ne\varnothing$.
Furthermore, it follows from \cite[Corollary~A.6]{KuznetsovProkhorov2023} that the length of the subscheme $F_1(X,P)$ is at most $3$,
so this subscheme must be reduced, since otherwise we would have $F_1(X,P)(\Bbbk)\ne\varnothing$. Set 
$$
C=\mathbf{T}_P(X)\cap X
$$
where $\mathbf{T}_P(X)$ stands for the embedded tangent space in $\mathbb{P}^9$ to the 3-fold $X$ at the point $P$. 
Then it follows from \cite[Lemma~5.6]{KuznetsovProkhorov2023} that $C$ is the cone over $F_1(X,P)$. 
Therefore, we see that $C$ is a reduced, irreducible, geometrically reducible curve, and one of the following two possibilities holds:
\begin{enumerate}
\item either $C$ is a conic, and $C_{\mathbb{C}}$ is a union two lines that intersect at $P$,
\item or $C$ is a cubic, and $C_{\mathbb{C}}$ is a union of three non-coplanar lines that intersect at $P$.
\end{enumerate}

In both cases, let $\pi\colon \widetilde{X}\to X$ be the blowup of the point $P$,
let $\widetilde{C}$ be the strict transform on $\widetilde{X}$ of the curve $C$,
let $\sigma\colon\widehat{X}\to\widetilde{X}$ be the blowup of the curve $\widetilde{C}$,
let $E$ be the $\pi$-exceptional surface, let $F$ be the $\sigma$-exceptional surface, 
and let $\widehat{E}$ be the strict transform on $\widehat{X}$ of the surface $E$.
Then $-K_{\widehat{X}}^3=6$ and 
$$
-K_{\widehat{X}}\sim (\pi\circ\sigma)^*\big(-K_X\big)-2\widehat{E}-F.
$$
Moreover, arguing as in the proofs of \cite[Lemma~5.11]{KuznetsovProkhorov2023} and \cite[Lemma~5.12]{KuznetsovProkhorov2023},
we see that the linear system $|-K_{\widehat{X}}|$ is base points free, and it gives a birational morphism  
$\varphi\colon\widehat{X}\to\overline{X}$ such that $\overline{X}$ is a normal complete intersection of a quadric hypersurface $Q\subset\mathbb{P}^5$
and a cubic hypersurface in $\mathbb{P}^5$.
Hence, we see that $\overline{X}$ is a Fano 3-fold that has canonical Gorenstein singularities.

\begin{lemma}
\label{lemma:V14-1}
The birational morphism $\varphi$ is small.
\end{lemma}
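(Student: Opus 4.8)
The plan is to work over $\mathbb{C}$, since smallness of $\varphi$ is insensitive to the base field, and to rule out a contracted divisor by a single numerical computation on $\widehat{X}$. The point is that $\varphi$ is the morphism attached to the base point free system $|-K_{\widehat{X}}|$, and since $\overline{X}$ is a normal $(2,3)$ complete intersection in $\mathbb{P}^5$ adjunction gives $-K_{\overline{X}}\cong\mathcal{O}_{\overline{X}}(1)$, so that $-K_{\widehat{X}}=\varphi^{*}(-K_{\overline{X}})$ with $-K_{\overline{X}}$ ample. Consequently any prime divisor $S\subset\widehat{X}$ contracted by $\varphi$ (that is, with $\dim\varphi(S)\le 1$) satisfies $(-K_{\widehat{X}})^{2}\cdot S=0$. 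Hence it suffices to prove that $(-K_{\widehat{X}})^{2}\cdot S>0$ for \emph{every} prime divisor $S$ on $\widehat{X}$. Writing $n$ for the number of lines in $C_{\mathbb{C}}$ (so $n=2$ in case $(1)$ and $n=3$ in case $(2)$), we have $\widetilde{C}_{\mathbb{C}}=\widetilde{\ell}_1\sqcup\cdots\sqcup\widetilde{\ell}_n$ and $F_{\mathbb{C}}=F_1+\cdots+F_n$, and $\mathrm{Pic}(\widehat{X}_{\mathbb{C}})$ is freely generated by $H=(\pi\circ\sigma)^{*}(-K_X)$, by $\widehat{E}$, and by $F_1,\dots,F_n$.

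First I would assemble the intersection table on $\widehat{X}_{\mathbb{C}}$, which is routine using $\sigma^{*}E=\widehat{E}$, the equalities $-K_X\cdot\ell_i=1$, and the standard self-intersection formulas for the two blowups (each $\widetilde{\ell}_i$ is a smooth rational curve with $-K_{\widetilde{X}}\cdot\widetilde{\ell}_i=-1$). The outcome I expect is
$$
(-K_{\widehat{X}})^{2}\cdot H=14-n,\qquad (-K_{\widehat{X}})^{2}\cdot\widehat{E}=4-n,\qquad (-K_{\widehat{X}})^{2}\cdot F_i=1 .
$$
There are then two kinds of prime divisors. The exceptional divisors $\widehat{E}$ and $F_1,\dots,F_n$ are disposed of at once, since $4-n>0$ and $1>0$ for $n\in\{2,3\}$. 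Any other prime divisor $S$ is the strict transform of an irreducible surface $T\subset X$; because $\rho(X)=1$ we have $T\in|a(-K_X)|$ for some integer $a\ge 1$, and therefore $S\sim aH-b\widehat{E}-\sum_{i=1}^{n}c_iF_i$ with $b=\mathrm{mult}_PT\ge 0$ and $c_i=\mathrm{mult}_{\ell_i}T\ge 0$.

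It remains to estimate $(-K_{\widehat{X}})^{2}\cdot S=(14-n)a-(4-n)b-\sum_{i=1}^{n}c_i$, and here is the one genuinely non-formal input: I would bound the multiplicities by $b\le a$ and $c_i\le a$. This follows from the projective normality of the anticanonical embedding $X\hookrightarrow\mathbb{P}^9$ (a genus $8$ prime Fano threefold is a linear section of $\mathrm{Gr}(2,6)$, hence arithmetically Cohen--Macaulay), which allows me to write $T=X\cap G$ for a hypersurface $G$ of degree $a$, whence $\mathrm{mult}_PT\le\mathrm{mult}_PG\le a$ and $\mathrm{mult}_{\ell_i}T\le\mathrm{mult}_{\ell_i}G\le a$. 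Since $4-n\ge 1>0$, these bounds give
$$
(-K_{\widehat{X}})^{2}\cdot S\ge (14-n)a-(4-n)a-na=(10-n)a\ge 7a>0 ,
$$
so no prime divisor can be $\varphi$-contracted and $\varphi$ is small. The main obstacle is precisely the justification of the multiplicity estimates $b\le a$ and $c_i\le a$, i.e.\ that members of $|a(-K_X)|$ extend to degree $a$ hypersurfaces in $\mathbb{P}^9$; the intersection-theoretic computation, although it must be carried out carefully in the two cases $n=2$ and $n=3$, is otherwise entirely routine.
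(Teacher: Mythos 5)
Your reduction and your intersection numbers are all correct: smallness of $\varphi$ is equivalent to $(-K_{\widehat{X}})^2\cdot S>0$ for every prime divisor $S$ on $\widehat{X}_{\mathbb{C}}$, the classes $H,\widehat{E},F_1,\dots,F_n$ do generate $\mathrm{Pic}(\widehat{X}_{\mathbb{C}})$, and one indeed gets $(-K_{\widehat{X}})^2\cdot H=14-n$, $(-K_{\widehat{X}})^2\cdot\widehat{E}=4-n$, $(-K_{\widehat{X}})^2\cdot F_i=1$ (the middle value is even confirmed by the degree computation for $\overline{E}$ later in the paper). This is also a genuinely different route from the paper, which proves the lemma simply by invoking the proof of \cite[Lemma~5.14]{KuznetsovProkhorov2023}.

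However, the step you yourself single out as the essential one is wrong, in two distinct ways. First, the justification is backwards: if $T=X\cap G$, then restricting the equation of $G$ to $X$ can only \emph{increase} the order of vanishing at $P$, i.e.\ $\mathrm{mult}_P T\ge\mathrm{mult}_P G$, not $\le$ (a tangent line to a conic has multiplicity $1$ at the point of tangency, while the restricted divisor has multiplicity $2$ there); so projective normality yields no upper bound whatsoever on $b$ or $c_i$. Second, and worse, the claimed bounds $b\le a$ and $c_i\le a$ are simply false for this $X$: since $\mathrm{Pic}(X_{\mathbb{C}})=\mathbb{Z}[-K_{X_{\mathbb{C}}}]$, every member of $|-K_X|$ is a prime divisor, and any hyperplane section $T=X\cap H$ with $H\supset\mathbf{T}_P(X)$ has $a=1$ but $b=\mathrm{mult}_P T\ge 2$ --- the existence of such sections, singular at $P$, is exactly the starting point of the whole double-projection construction (it is why $C=\mathbf{T}_P(X)\cap X$ is a cone of lines through $P$). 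For that particular divisor your target quantity happens to stay positive (for $n=2$ and $c_1=c_2=1$ it equals $6$), but your argument for a \emph{general} prime divisor collapses: without valid upper bounds on $b$ and the $c_i$, the equality $(14-n)a=(4-n)b+\sum_i c_i$ is not excluded. Note moreover that $(-K_{\widehat{X}})^2\cdot S\ge 0$ holds for free, since $-K_{\widehat{X}}$ is nef and its restriction to $S$ is a nef class on a surface; so ruling out the equality case is precisely the content of the lemma, and it is the borderline case your numerics cannot reach. Closing the gap requires structural input about which classes $aH-b\widehat{E}-\sum_i c_iF_i$ can be effective and contracted (e.g.\ the fixed-component and linear-system dimension analysis carried out in the proof of \cite[Lemma~5.14]{KuznetsovProkhorov2023}, which is what the paper cites), not just the intersection table you set up.
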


\begin{proof}
The required assertion follows from the proof of \cite[Lemma~5.14]{KuznetsovProkhorov2023}.
\end{proof}

In particular, the 3-fold $\overline{X}$ has (isolated) terminal Gorenstein singularities.
Then, arguing as in the proof of \cite[Lemma~5.14]{KuznetsovProkhorov2023}, we see that $\overline{X}$ is not a cone and $\overline{X}$ is not covered by lines.

\begin{lemma}
\label{lemma:V14-2}
The quadric $Q$ has at most one singular point.
\end{lemma}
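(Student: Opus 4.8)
The plan is to bound the rank of the quadratic form defining $Q$ from below, since a quadric $Q\subset\mathbb{P}^5$ has at most one singular point precisely when $\operatorname{rank}Q\ge 5$, equivalently when $\dim\operatorname{Sing}(Q)\le 0$. Write $\overline{X}=Q\cap W$ with $W\subset\mathbb{P}^5$ the cubic hypersurface, and let $q$ and $w$ be the defining forms. Recall that $\operatorname{Sing}(Q)$ is the linear subspace $\mathbb{P}^{5-\operatorname{rank}Q}$ and is contained in $Q$. At any point $p\in\operatorname{Sing}(Q)$ the gradient of $q$ vanishes, so the Jacobian of the pair $(q,w)$ has rank at most $1$ at $p$; hence every point of $\operatorname{Sing}(Q)\cap W$ is a singular point of $\overline{X}$. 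Because $\overline{X}$ has only finitely many (isolated) singular points, $\operatorname{Sing}(Q)\cap W$ is finite. If $\operatorname{rank}Q\le 3$, then $\dim\operatorname{Sing}(Q)\ge 2$, and $\operatorname{Sing}(Q)\cap W$ is the trace of the cubic $W$ on a linear space of dimension at least $2$; this has dimension at least $1$ (or equals the whole space, if it lies in $W$, forcing a two-dimensional singular locus of $\overline{X}$), a contradiction. This already gives $\operatorname{rank}Q\ge 4$, and it remains to rule out $\operatorname{rank}Q=4$, that is, to show that $\operatorname{Sing}(Q)$ is not a line.

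Suppose to the contrary that $\operatorname{rank}Q=4$ and $L:=\operatorname{Sing}(Q)$ is a line. Since $L\subset Q$ and $\overline{X}$ has isolated singularities, $L$ is not contained in $\overline{X}$, so $L\cap W=L\cap\overline{X}$ is a zero-dimensional subscheme of length $3$, all of whose points are singular points of $\overline{X}$. As the singularities of $\overline{X}$ are terminal, I expect this scheme to be reduced, since a non-reduced point of $L\cap W$ would produce a singularity of $\overline{X}$ that is too degenerate; thus $\overline{X}$ would acquire three distinct, and \emph{collinear}, nodes lying on $L$. The strategy is then to contradict this configuration by counting and locating the singular points of $\overline{X}$ by means of the small contraction $\varphi$.

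Since $\varphi\colon\widehat{X}\to\overline{X}$ is small and $\widehat{X}$ is smooth, the singular points of $\overline{X}$ are in bijection with the connected components of $\operatorname{Exc}(\varphi)$, each a flopping curve contracted to a node. Tracing these curves back through $\sigma$ and $\pi$ as in the proof of \cite[Lemma~5.14]{KuznetsovProkhorov2023}, I expect them to be the strict transforms of the irreducible components of $C$, namely of the lines through $P$; thus $\overline{X}$ has exactly two nodes in case $(1)$ and exactly three in case $(2)$. In case $(1)$ two nodes are incompatible with the three points forced on $L$, so $\operatorname{rank}Q\ge 5$ there. In case $(2)$ all three nodes would have to be the three collinear points on $L$, whereas they correspond to the three lines through $P$, which are by assumption non-coplanar. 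The hard part will be exactly to convert this non-coplanarity into non-collinearity of the three nodes: I would do this by following the three flopping curves through the flop $\chi$ and the two blow-ups and verifying that their images in $\mathbb{P}^5$ span more than a line. Alternatively, one may try to show directly that three collinear nodes of $\overline{X}$ would force $\overline{X}$ either to be a cone or to be covered by lines — both already excluded — which would complete the proof that $Q$ has at most one singular point.
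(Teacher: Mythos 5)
Your opening step is correct and complete: points of $\mathrm{Sing}(Q)\cap W$ are singular on $\overline{X}$, so $\dim\mathrm{Sing}(Q)\ge 2$ would force a positive-dimensional singular locus, which is excluded. But the heart of the lemma --- ruling out $\mathrm{Sing}(Q)=L$ a line --- rests on two claims that are respectively false and missing. First, the curves contracted by $\varphi$ are \emph{not} the strict transforms of the components of $C$: those components are exactly the centre of the blowup $\sigma$, so on $\widehat{X}$ they have been replaced by the divisor $F$ and there is nothing left of them to contract. What $\varphi$ contracts are the curves on which $-K_{\widehat{X}}\sim(\pi\circ\sigma)^*(-K_X)-2\widehat{E}-F$ has degree zero; for instance, the strict transform of any line of $X_{\mathbb{C}}$ that meets $C$ transversally in a single point different from $P$ has anticanonical degree $1-2\cdot 0-1=0$ and is therefore contracted, and nothing bounds the number of such lines by $2$ or $3$. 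Hence the count of singular points of $\overline{X}$ on which your case $(1)$ relies is unjustified and in general wrong (moreover, terminal Gorenstein points are cDV points, not necessarily ordinary nodes, and the reducedness of $L\cap W$ is asserted rather than proved). Second, in case $(2)$ you yourself flag that the decisive step --- converting non-coplanarity of the three lines through $P$ into non-collinearity of three singular points of $\overline{X}$ --- is not carried out, and the fallback suggestion (that three collinear singular points force $\overline{X}$ to be a cone or covered by lines) is only a hope. So the proposal proves the easy half of the statement and leaves a genuine gap precisely where the lemma has content.

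For comparison, the paper's proof avoids any counting or location of singular points. If $\mathrm{Sing}(Q)$ were a line, then $Q$ would be a cone over $\mathbb{P}^1\times\mathbb{P}^1$, so on $Q\setminus\mathrm{Sing}(Q)$ the hyperplane class splits as the sum of two movable classes (the cones over the two rulings); restricting to the smooth locus of $\overline{X}$, pulling back along the small morphism $\varphi$, and pushing forward by $\pi\circ\sigma$ would exhibit $-K_X$ as a sum of two movable classes, which is absurd since $\mathrm{Pic}(X)$ is generated by $-K_X$. The same argument excludes $\mathrm{Sing}(Q)$ being a plane, whereas your dimension argument handles that case separately. Any repair of your approach would require controlling the full set of flopping curves of $\varphi$, which is considerably more work than this class-group argument.
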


\begin{proof}
Suppose that $\mathrm{Sing}(Q)$ is a line $\ell$. Then the hyperplane class of $Q\setminus\ell$ can be
represented as the sum of two movable classes, so the same is true for $\overline{X}\setminus\mathrm{Sing}(X)$, 
hence the same is true for $-K_{\widehat{X}}$ and therefore for the hyperplane class of $X$, which is absurd,
since $\mathrm{Pic}(X)$ is generated by $-K_X$. Similarly, we see that $\mathrm{Sing}(Q)$ cannot be a plane.
\end{proof}

Set $\overline{E}=\varphi(\widehat{E})$. Then $\overline{E}$ is a $\Bbbk$-rational surface in $\overline{X}\subset\mathbb{P}^5$ such that
$$
\mathrm{deg}(\overline{E})=(-K_{\widehat{X}})^2\cdot\widehat{E}=\left\{\aligned
&2\ \text{if $C$ is a reduced conic},\\ 
&1\ \text{if $C$ is a reduced cubic}.
\endaligned
\right.
$$
Hence, if $C$ is a reduced cubic, then $\overline{E}$ is a plane.
Similarly, if $C$ is a reduced conic, then $\overline{E}$ is a quadric surface.
Now, the unirationality of $X$ follows from the following two lemmas.

\begin{lemma}
\label{lemma:V14-4}
Suppose that $C$ is a conic. Then $X$ is unirational over $\Bbbk$.
\end{lemma}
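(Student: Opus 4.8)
The plan is to reduce the assertion to the unirationality of the singular Fano 3-fold $\overline{X}$ and then to exploit the quadric surface $\overline{E}\subset\overline{X}$. Since the blowups $\pi$, $\sigma$ and the birational morphism $\varphi$ are all defined over $\Bbbk$, the 3-fold $X$ is $\Bbbk$-birational to $\overline{X}$, so it is enough to prove that $\overline{X}$ is $\Bbbk$-unirational. Recall that $\overline{X}=Q\cap W$ is a normal complete intersection of a quadric and a cubic in $\mathbb{P}^5$ with isolated terminal Gorenstein singularities, that $\varphi$ is small by Lemma~\ref{lemma:V14-1}, and that in the conic case $\overline{E}$ is a $\Bbbk$-rational quadric surface of degree $2$. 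Write $\Pi=\langle\overline{E}\rangle\cong\mathbb{P}^3$ for its linear span, which is defined over $\Bbbk$; since $\overline{E}\subset Q$, we have $\overline{E}=Q\cap\Pi=\overline{X}\cap\Pi$.

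First I would form the projection from $\Pi$, that is, the rational map $\overline{X}\dasharrow\mathbb{P}^1$ given over $\Bbbk$ by the pencil of hyperplane sections of $\overline{X}$ containing $\Pi$. Every such hyperplane section contains $\overline{E}$ as a fixed component, and after removing it one is left with a $\Bbbk$-pencil whose members $R_t$ are the fibres of this map; in the notation of divisor classes this is the pencil $|H-\overline{E}|$, where $H$ is the hyperplane class. A direct computation in coordinates adapted to $\Pi$ shows that, modulo the equation of $Q$, the cubic $W$ restricted to each of these hyperplanes becomes divisible by the linear form cutting out $\Pi$; the residual surface $R_t$ is therefore the complete intersection of two quadrics inside the hyperplane $\mathbb{P}^4\supset\Pi$, hence a \emph{del Pezzo surface of degree} $4$. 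Because $\overline{X}$ has only finitely many singular points (and $Q$ has at most one singular point by Lemma~\ref{lemma:V14-2}), a Bertini-type argument away from the base locus shows that the generic fibre $R_\eta$ over the generic point $\eta$ of $\mathbb{P}^1$ is a smooth del Pezzo quartic defined over $\Bbbk(t)$.

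The key point is that the fixed surface $\overline{E}$ itself serves as a multisection of this fibration. Restricting the pencil $|H-\overline{E}|$ to $\overline{E}$ and computing $\overline{E}|_{\overline{E}}$ by adjunction (one gets the class $(-1,-1)$ when $\overline{E}\cong\mathbb{P}^1\times\mathbb{P}^1$, so that $(H-\overline{E})|_{\overline{E}}$ is the moving class $(2,2)$), one sees that the induced pencil on $\overline{E}$ genuinely moves; thus the map $p\colon\overline{E}\dasharrow\mathbb{P}^1$ is dominant and defined over $\Bbbk$, and in particular $\overline{E}$ is not contained in a single fibre. Consequently the generic point $\xi$ of $\overline{E}$ lies in the fibre $R_\eta$ and furnishes a $\Bbbk(\overline{E})$-point of $R_\eta$, which is moreover in general position on $R_\eta$. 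Since a smooth del Pezzo surface of degree $4$ with a rational point in general position is unirational over its field of definition, $R_\eta\otimes_{\Bbbk(t)}\Bbbk(\overline{E})$ is $\Bbbk(\overline{E})$-unirational. Pulling the del Pezzo fibration back along $p$ yields a fibration over the $\Bbbk$-rational surface $\overline{E}$ whose generic fibre is $\Bbbk(\overline{E})$-unirational; its total space $\overline{X}\times_{\mathbb{P}^1}\overline{E}$ is therefore $\Bbbk$-unirational and dominates $\overline{X}$ via the first projection. Hence $\overline{X}$, and with it $X$, is unirational over $\Bbbk$.

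The hard part will be the precise identification of the residual pencil: one must verify, using the control on the singularities of $\overline{X}$ and of $Q$ provided by Lemmas~\ref{lemma:V14-1} and~\ref{lemma:V14-2}, that the members $R_t$ really are del Pezzo surfaces of degree $4$ with smooth generic member, rather than degenerating (for instance if $Q$ acquired a singular point on $\Pi$). The second delicate point is to confirm that $\overline{E}$ dominates $\mathbb{P}^1$, so that it supplies the $\Bbbk(\overline{E})$-point on the generic del Pezzo quartic in a sufficiently general position; once these two facts are established, the classical unirationality of degree-$4$ del Pezzo surfaces with a rational point closes the argument.
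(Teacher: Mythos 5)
Your proposal is correct and follows essentially the same route as the paper: project $\overline{X}$ from the linear span $\Pi$ of the quadric surface $\overline{E}$, identify the general fiber of the resulting pencil as a quartic del Pezzo surface, and use the $\Bbbk$-rational surface $\overline{E}$, which dominates $\mathbb{P}^1$, as a multisection supplying a point on the generic fiber. The only cosmetic differences are that the paper runs the intersection computations on the small resolution $Y$ of $\overline{X}$ (where the projection becomes a morphism, so generic smoothness of fibers comes for free, avoiding your base-locus worry) and quotes \cite[Lemma~4.14]{KuznetsovProkhorov2023} for the final multisection-implies-unirationality step, which you instead reprove by base change to $\Bbbk(\overline{E})$.
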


\begin{proof}
By construction, $\overline{E}$ is a smooth quadric surface in $\mathbb{P}^5$.
In fact, we have $\overline{E}\simeq\mathbb{P}^1\times\mathbb{P}^1$. 
Let $\Pi$ be the three-dimensional linear subspace in $\mathbb{P}^5$ that contains the quadric $\overline{E}$. 
Then $\Pi\not\subset Q$, because $Q$ has at most one singular point. 
This gives $\overline{E}=Q\cap\Pi=\overline{X}\cap\Pi$.

Let $\psi\colon V\dasharrow\mathbb{P}^1$ be the rational map given by the projection from $\Pi$.
Then $\psi$ is given by the linear system $|-K_{\overline{X}}-\overline{E}|$, and we have the following commutative diagram:
$$
\xymatrix@R=1em{
&Y\ar@{->}[dl]_{\rho}\ar@{->}[dr]^{\eta}&\\%
\overline{X}\ar@{-->}[rr]_{\psi} && \mathbb{P}^1}
$$
where $\rho$ is a birational morphism induced by the blowup of $\mathbb{P}^5$ along $\Pi$, and $\eta$ is a morphism. 
Moreover, the birational morphism $\rho$ is a small, because $\overline{X}$ has terminal Gorenstein singularities.
Thus, we see that $-K_{Y}\sim\rho^*(-K_{\overline{X}})$, and $Y$ also has terminal Gorenstein singularities. 

Let $E_Y$ be the strict transform of the surface $E$ on the threefold $Y$, let $S$ be a general fiber of $\eta$,
and let $\overline{S}=\rho(S)$.
Then $S\sim -K_{Y}-E_Y$, and 
$$
\mathrm{deg}(\overline{S})=(-K_Y)^2\cdot S=(-K_Y)^2\cdot (-K_{Y}-E_Y)=(-K_{\widehat{X}})^2\cdot (-K_{\widehat{X}}-\widehat{E})=4,
$$
which implies that $\overline{S}$ is an irreducible surface of degree $4$, because $\overline{X}$ is not covered by lines.
Now, it follows from the adjunction formula that $S$ is a smooth del Pezzo surface of degree $(-K_S)^2=(-K_Y)^2\cdot S=4$.
Furthermore, we have 
$$
-K_{Y}\cdot E_Y\cdot (-K_{Y}-E_Y)=(-K_{\widehat{X}})\cdot \widehat{E}\cdot (-K_{\widehat{X}}-\widehat{E})=4\ne 0,
$$
which implies that the restriction morphism $\eta\vert_{E_Y}\colon E_Y\to\mathbb{P}^1$ is surjective.
Then $Y$ is unirational over $\Bbbk$ by \cite[Lemma~4.14]{KuznetsovProkhorov2023}, because  $E_Y$ is rational over $\Bbbk$.
Hence, $X$ is also unirational.
\end{proof}

\begin{lemma}
\label{lemma:V14-3}
Suppose that $C$ is a cubic curve. Then $X$ is unirational over $\Bbbk$.
\end{lemma}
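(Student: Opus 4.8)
The plan is to mirror the proof of Lemma~\ref{lemma:V14-4}, replacing the projection from a $3$-dimensional linear subspace by the projection from the plane $\overline{E}$. Recall that when $C$ is a cubic the surface $\overline{E}=\varphi(\widehat{E})$ is a plane in $\overline{X}\subset\mathbb{P}^5$; being the image of the $\pi$-exceptional $\mathbb{P}^2$ lying over the $\Bbbk$-point $P$, it is defined over $\Bbbk$ and $\overline{E}\simeq\mathbb{P}^2_\Bbbk$ is rational over $\Bbbk$. First I would consider the linear projection $\psi\colon\overline{X}\dasharrow\mathbb{P}^2$ away from $\overline{E}$, given by the linear system $|-K_{\overline{X}}-\overline{E}|=|H-\overline{E}|$, where $H$ is the hyperplane class (so that $-K_{\overline{X}}\sim H$, since $\overline{X}$ is a normal $(2,3)$-complete intersection). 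Resolving $\psi$ by blowing up $\mathbb{P}^5$ along $\overline{E}$ and taking the strict transform of $\overline{X}$, I obtain a commutative diagram
$$
\xymatrix@R=1em{
&Y\ar@{->}[dl]_{\rho}\ar@{->}[dr]^{\eta}&\\
\overline{X}\ar@{-->}[rr]_{\psi} && \mathbb{P}^2}
$$
with $\rho$ birational and $\eta$ a morphism. Exactly as in Lemma~\ref{lemma:V14-4}, the fact that $\overline{X}$ has terminal Gorenstein singularities forces $\rho$ to be small, so that $-K_Y\sim\rho^*(-K_{\overline{X}})$ and $Y$ again has terminal Gorenstein singularities.

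The next step is to identify the general fibre of $\eta$. Such a fibre is the residual curve of $\overline{X}\cap\Pi$, where $\Pi\supset\overline{E}$ is a general $3$-dimensional linear subspace. Since $Q$ has at most one singular point by Lemma~\ref{lemma:V14-2}, it contains no $3$-plane, hence $Q\cap\Pi=\overline{E}\cup\Pi'$ is a union of two planes; likewise the cubic containing $\overline{X}$ cuts out on $\Pi$ the surface $\overline{E}\cup\mathcal{Q}$ with $\mathcal{Q}$ a quadric surface. Therefore the residual curve is $\Pi'\cap\mathcal{Q}$, which is a conic, and because $\overline{X}$ is neither a cone nor covered by lines this conic is irreducible for general $\Pi$. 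Thus $\eta\colon Y\to\mathbb{P}^2$ is a conic bundle.

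Then I would check that $\overline{E}$ is a bisection of $\eta$. With the notation above, the conic $f=\Pi'\cap\mathcal{Q}$ meets $\overline{E}$ in $(\overline{E}\cap\Pi')\cap\mathcal{Q}=\lambda\cap\mathcal{Q}$, where $\lambda=\overline{E}\cap\Pi'$ is a line; this is two points. Consequently the strict transform $E_Y$ of $\overline{E}$ on $Y$ satisfies $E_Y\cdot f=2$, so $\eta\vert_{E_Y}\colon E_Y\to\mathbb{P}^2$ is dominant (indeed generically two-to-one), and $E_Y\simeq\overline{E}$ is rational over $\Bbbk$ since $\rho$ is small and thus restricts to a birational map $E_Y\to\overline{E}\simeq\mathbb{P}^2_\Bbbk$. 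Since $\eta$ is a conic bundle admitting the $\Bbbk$-rational multisection $E_Y$, it follows from \cite[Lemma~4.14]{KuznetsovProkhorov2023} that $Y$ is unirational over $\Bbbk$; concretely, base-changing the conic bundle $\eta$ along $E_Y$ produces a section, so $Y\times_{\mathbb{P}^2}E_Y$ is rational over $\Bbbk(E_Y)$ and dominates $Y$. Hence $\overline{X}$, and therefore $X$, is unirational over $\Bbbk$.

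The main obstacle I anticipate is the geometric identification of the conic-bundle structure: verifying that projecting the $(2,3)$-complete intersection $\overline{X}$ from the plane $\overline{E}$ produces an irreducible conic as general fibre (which is precisely where Lemma~\ref{lemma:V14-2} and the fact that $\overline{X}$ is neither a cone nor covered by lines are used) and that $\rho$ is small, so that the numerical computation $E_Y\cdot f=2$ may be carried out on the smooth model. These are the cubic-case analogues of the estimates in the proof of Lemma~\ref{lemma:V14-4} and \cite[Lemma~5.14]{KuznetsovProkhorov2023}, and once they are in place the unirationality is immediate from \cite[Lemma~4.14]{KuznetsovProkhorov2023}.
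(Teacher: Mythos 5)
Your proof is correct and follows the paper's strategy exactly: project $\overline{X}$ from the plane $\overline{E}$, resolve by blowing up $\mathbb{P}^5$ along $\overline{E}$ so that $\rho$ is small and $\eta\colon Y\to\mathbb{P}^2$ is a conic bundle, show that $E_Y$ is a $\Bbbk$-rational bisection, and conclude with \cite[Lemma~4.14]{KuznetsovProkhorov2023}. The one genuine difference is how the two numerical facts are verified. The paper computes intersection numbers, transferred to $\widehat{X}$ through the small morphisms: $-K_Y\cdot(-K_Y-E_Y)^2=2$ identifies the general fiber as a conic (irreducible since $\overline{X}$ is not covered by lines), and $E_Y\cdot(-K_Y-E_Y)^2=2$, obtained from $E_Y^3=-3$, makes $\eta\vert_{E_Y}$ generically two-to-one. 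You argue synthetically instead: a general $3$-plane $\Pi\supset\overline{E}$ meets the quadric $Q$ in $\overline{E}\cup\Pi'$ and the cubic hypersurface through $\overline{X}$ in $\overline{E}\cup\mathcal{Q}$, so the residual fiber is the conic $\Pi'\cap\mathcal{Q}$, which meets $\overline{E}$ in the two points $\lambda\cap\mathcal{Q}$ with $\lambda=\overline{E}\cap\Pi'$. This works, and it is where you use Lemma~\ref{lemma:V14-2} (so that $Q$ contains no $3$-plane) --- a lemma the paper invokes only in the conic case, Lemma~\ref{lemma:V14-4}; the small price is a handful of genericity checks left implicit in your write-up: that $\Pi'\ne\overline{E}$ (otherwise $Q$ would be singular along a positive-dimensional locus), that $\lambda\not\subset\mathcal{Q}$ (which does follow from your own irreducibility argument, since otherwise the fiber would contain the line $\lambda$), and that the general fiber avoids $\mathrm{Sing}(Y)$ and the finitely many $\rho$-exceptional curves, so that the two-point count really is the intersection number $E_Y\cdot f$ computed on $Y$. (Also, $Y$ is only terminal Gorenstein, not smooth, so your closing phrase about carrying out the computation ``on the smooth model'' should simply refer to $Y$.) The paper's intersection-theoretic computation subsumes all of these checks at once; your version makes the degree-two multisection geometrically visible.
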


\begin{proof}
In this scenario, $\overline{E}$ is a plane.
Let $\psi\colon \overline{X}\dasharrow\mathbb{P}^2$ be the map given by the projection from $\overline{E}$.
Then $\psi$ is given by the linear system $|-K_{\overline{X}}-\overline{E}|$, and we have the following commutative diagram:
$$
\xymatrix@R=1em{
&Y\ar@{->}[dl]_{\rho}\ar@{->}[dr]^{\eta}&\\%
\overline{X}\ar@{-->}[rr]_{\psi} && \mathbb{P}^2}
$$
where $\rho$ is a birational morphism induced by the blowup of $\mathbb{P}^5$ along the plane $\overline{E}$, and $\eta$ is a morphism. 
Moreover, the birational morphism $\rho$ is small, because $\overline{X}$ has terminal Gorenstein singularities.
Thus, we see that $-K_{Y}\sim\rho^*(-K_{\overline{X}})$, and $Y$ also has terminal Gorenstein singularities. 

Let $E_Y$ be the strict transform of the surface $E$ on the threefold $Y$.
Then the morphism $\eta$ is given by the linear system $|-K_{Y}-E_Y|$, and 
$$
-K_Y\cdot (-K_{Y}-E_Y)^2=-K_{\widehat{X}}\cdot (-K_{\widehat{X}}-\widehat{E})^2=2.
$$
This implies that $\eta$ is surjective, and its general fiber is irreducible and isomorphic to $\mathbb{P}^1$, 
because otherwise the image in $\overline{X}$ of a general fiber of $\eta$ would be a union of two lines, but $\overline{X}$ is not covered by lines.
Hence, we see that $\eta$ is a conic bundle. Similarly, we see that $E_Y^3=-3$, because
$$
0=(-K_{Y}-E_Y)^3=(-K_{Y})^3-3(-K_{Y})^2\cdot E_Y+3(-K_{Y})\cdot E_Y-E_Y^3=-3-E_Y^3.
$$
This gives
$$
E_Y\cdot (-K_{Y}-E_Y)^2=E_Y\cdot (-K_{Y})^2-2(-K_{Y})\cdot E_Y^2+E^3=5+E^3=2,
$$
so the restriction morphism $\eta\vert_{E_Y}\colon E_Y\to\mathbb{P}^2$ is generically two-to-one. 
On the other hand, $E_Y$ is rational over $\Bbbk$ by construction.
Then $Y$ is unirational by \cite[Lemma~4.14]{KuznetsovProkhorov2023}, so $X$ is also unirational.
\end{proof}

\end{document}